\newtheorem{theorem}{Theorem}[section]
\newtheorem{lemma}[theorem]{Lemma}
\newtheorem{corollary}[theorem]{Corollary}
\newtheorem{proposition}[theorem]{Proposition}
\newtheorem{theoremA}{Theorem}
\theoremstyle{definition}
\newtheorem{example}[theorem]{Example}
\numberwithin{equation}{section}
\def\UU{{\mathcal U}}
\def\MM{{\mathcal M}}
\begin{document}
\begin{sloppypar}	

	\title{\textbf{A note on the $ \Pi $-property of some subgroups of finite groups}}
	
	\author{Zhengtian Qiu,  Guiyun Chen  and Jianjun Liu\footnote{Corresponding author.}\vspace{0.5em}\\
		{\small School of Mathematics and Statistics, Southwest University,}\\
		{\small  Chongqing 400715, People's Republic of China
		}\vspace{0.5em}\\
		{\small  E-mail addresses: qztqzt506@163.com,  gychen1963@163.com, liujj198123@163.com}}
	
	\date{}
	\maketitle
	
	\begin{abstract}
	Let $ H $ be a subgroup of a finite group $ G $. We say that $ H $ satisfies the $ \Pi  $-property in $ G $ if for any chief factor $ L / K $ of $ G $, $ |G/K : N_{G/K}(HK/K\cap L/K )| $ is a $ \pi (HK/K\cap L/K) $-number. In this paper,  we obtain some criteria for the $ p $-supersolubility or $ p $-nilpotency of a finite group and extend some known results by concerning some subgroups that satisfy the $ \Pi $-property.
	\end{abstract}

	\renewcommand{\thefootnote}{\fnsymbol{footnote}}
	
	\footnotetext[0]{Keywords: finite group, $ p $-supersoluble group, $ p $-nilpotent group, the $ \Pi $-property.}
	
	\footnotetext[0]{2020 Mathematics Subject Classification: 20D10, 20D20.}
	
	\section{Introduction}
	
    All groups considered in this paper are finite.
    We use conventional notions as in \cite{Huppert-1967}. Throughout the paper,  $ G $ always denotes a finite group, $ p $ denotes a fixed prime, $ \pi $ denotes some set of primes  and $ \pi(G) $ denotes the set of all primes dividing $ |G| $. An integer $ n $ is called a $ \pi $-number if all prime divisors of $ n $ belong to $ \pi $.

    Suppose that $ P $ is a $ p $-group for some prime $ p $.  Let $ \MM(P) $ be the set of all
    maximal subgroups of $ P $. Let $ d $ be the smallest generator number of $ P $, i.e., $ p^{d} = |P/\Phi(P)| $, where $ \Phi(P) $ is the Frattini  subgroup of $ P $. According to \cite{Li-He-2008}, $ \MM _{d}(P) = \{P_{1},...,P_{d}\} $ is  a subset  of $ \MM(P) $ such that  $ \mathop{\cap}\limits_{i=1}^{d} P_{i}=\Phi(P) $. Notice that the subset $ \MM_{d}(P) $ is not unique for a fixed $p$-group  $ P $ in general. Li and He \cite{Li-He-2008} showed that $ |\MM(P)|\gg |\MM_{d}(P)| $,  because $$ \lim\limits_{d \to \infty}\frac{|\MM(P)|}{|\MM_{d}(P)|}=\lim\limits_{d \to \infty}\frac{\frac{p^{d}-1}{p-1}}{d}=\infty. $$

    In \cite{li-2011}, Li introduced  the concept of the $\Pi$-property  of subgroups of finite groups. Let $ H $ be a subgroup of a group $ G $. We say  that $ H $ satisfies the $ \Pi  $-property in $ G $ if for any   chief factor $ L/K $ of $ G $, $ |G/K : N_{G/K}(HK/K\cap L/K )| $ is a $ \pi (HK/K\cap L/K) $-number. There are many examples of embedding properties of
    subgroups implying the possession of the $ \Pi $-property (see Section \ref{S4}).

    In this note, we obtain some criteria for the $ p $-supersolubility or $ p $-nilpotency of a group $ G $  by assuming that some $ p $-subgroups of $ N $ satisfy the $ \Pi $-property in $ G $, where $ N $ is a normal subgroup of $ G $ such that $ G/N $ is $ p $-supersoluble. More precisely, we prove the following results:

    \begin{theoremA}\label{first}
    Let $ G $ be a $ p $-soluble group, and let $ P $ be a Sylow $ p $-subgroup
    of $ G $, where $ p $ is a prime dividing the order of $ G $. Assume that $ N $ is a normal subgroup of $ G $ such that $ G/N $ is $ p $-supersoluble. Then $ G $ is $ p $-supersoluble if and only if  $ P_{i}\cap N $ satisfies the $ \Pi  $-property in $ G $ for every member $ P_{i} $ of some fixed $ \MM_{d}(P) $.
    \end{theoremA}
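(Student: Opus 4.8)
The plan is to prove the two implications separately. The forward direction is routine: if $G$ is $p$-supersoluble and $H=P_i\cap N$, then $H$ is a $p$-group, so for every chief factor $L/K$ of $G$ the subgroup $HK/K$ is a $p$-group; hence $HK/K\cap L/K$ is trivial when $L/K$ is a $p'$-chief factor, and when $L/K$ is a $p$-chief factor it is cyclic of order $p$, forcing $HK/K\cap L/K$ to be trivial or all of $L/K$. In either case $HK/K\cap L/K\trianglelefteq G/K$, so the index in the definition of the $\Pi$-property equals $1$. Thus the content is the converse, which I would attack by taking a counterexample $G$ of minimal order.

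The first block of work is a chain of standard reductions, using the fact (recorded among the preliminaries) that the $\Pi$-property passes to quotients. First I would reduce to $O_{p'}(G)=1$: for $\overline G=G/O_{p'}(G)$ one has $\overline P\cong P$, the images of $P_1,\dots,P_d$ form an $\MM_d(\overline P)$, $\overline G/\overline N$ is a quotient of $G/N$, and $\overline P_i\cap\overline N=(P_i\cap N)O_{p'}(G)/O_{p'}(G)$ keeps the $\Pi$-property, so $\overline G$ is $p$-supersoluble by minimality and then so is $G$ (since $O_{p'}(G)$ is a $p'$-group), a contradiction. Then, using that the class of $p$-supersoluble groups is a saturated formation, I would reduce to $\Phi(G)=1$ by taking a minimal normal subgroup $L\le\Phi(G)$ — necessarily a $p$-group since $O_{p'}(G)=1$, hence $L\le\Phi(P)\le P_i$ — observing that the $P_i/L$ form an $\MM_d(P/L)$ and $(P_i/L)\cap(NL/L)=(P_i\cap N)L/L$ inherits the $\Pi$-property, so $G/L$ and therefore $G/\Phi(G)$ is $p$-supersoluble, and applying saturation. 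After these reductions $\F^*(G)=\F(G)=O_p(G)\neq1$ is a direct product of complemented minimal normal subgroups of $G$, $\C_G(O_p(G))\le O_p(G)$, and one checks immediately that $N\neq1$, $p\mid|N|$, and $O_p(G)\cap N\neq1$ (otherwise $[O_p(G),N]=1$ gives $N\le\C_G(O_p(G))\le O_p(G)$ and then $N=1$).

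Next I would fix a minimal normal subgroup $L$ of $G$ with $L\le O_p(G)\cap N$, of order $p^{n}$. Since $L\trianglelefteq G$ and $\Phi(G)=1$ one has $L\not\le\Phi(P)$, so some member $P_i$ of $\MM_d(P)$ satisfies $P_iL=P$, whence $U:=P_i\cap L=P_i\cap N\cap L$ is a maximal subgroup of $L$; feeding the chief factor $L=L/1$ into the $\Pi$-property of $P_i\cap N$ shows that $|G:N_G(U)|$ is a power of $p$ as soon as $n\ge2$. The heart of the proof — and the step I expect to be the main obstacle — is to upgrade this to $n=1$. I would pass to $\overline G=G/\C_G(L)$, which acts faithfully and irreducibly on the $\mathbb F_p$-space $L$ with $O_p(\overline G)=1$; as a subgroup of $\overline G$ of $p$-power index contains a Hall $p'$-subgroup of the $p$-soluble group $\overline G$, and $\C_G(L)\le N_G(U)$, the group $O_{p'}(\overline G)$ stabilises $U$ and hence every $\overline G$-conjugate of $U$; the intersection of those conjugates is then a $\overline G$-submodule of $L$ contained in $U\subsetneq L$, so it is zero, and consequently $L$ is a sum of one-dimensional $O_{p'}(\overline G)$-modules which, by Clifford's theorem, splits as the direct sum of its $O_{p'}(\overline G)$-homogeneous components transitively permuted by $\overline G$. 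Running this analysis simultaneously over all $P_i$ with $L\not\le P_i$ — whose intersections $P_i\cap L$ with $L$ are then $O_{p'}(\overline G)$-submodules satisfying $\bigcap_{L\not\le P_i}(P_i\cap L)=\Phi(P)\cap L$ — and combining with $\bigcap_{i=1}^{d}P_i=\Phi(P)$ and $\Phi(G)=1$, one is forced to $n=1$. Using a single $P_i$ does not suffice here: there are irreducible $p$-soluble monomial subgroups of $\mathrm{GL}(n,p)$ with $n\ge2$ stabilising a hyperplane of $p$-power index, for instance $(C_2)^3\rtimes S_3\le\mathrm{GL}(3,3)$ with $p=3$, so the whole of $\MM_d(P)$ has to be exploited.

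Once $n=1$, $L$ is cyclic of order $p$ and central in $P$, with $L\not\le\Phi(P)$, so $d(P/L)=d-1$. A final descent modulo $L$ — analogous to the reduction to $\Phi(G)=1$ but now accounting for the drop in $d$ — shows that (after replacing $L$ by a suitable minimal normal subgroup of order $p$ inside $O_p(G)\cap N$ if necessary) the images in $G/L$ of the $d-1$ members of $\MM_d(P)$ that contain $L$ form an $\MM_{d-1}(P/L)$ whose members, intersected with $N/L$, satisfy the $\Pi$-property in $G/L$. Since $(G/L)/(N/L)\cong G/N$ is $p$-supersoluble, $G/L$ satisfies the hypotheses of the theorem and is $p$-supersoluble by minimality; as $|L|=p$, $G$ is $p$-supersoluble, the desired contradiction, completing the proof.
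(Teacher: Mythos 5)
You have the right global architecture (minimal counterexample, $O_{p'}(G)=1$, Frattini reduction, analysis of minimal normal $p$-subgroups), but the step you yourself flag as the heart of the proof is both unnecessary and unproved. At the point where $L$ is a minimal normal subgroup with $L\le O_{p}(G)\cap N$, $L\nleq P_{i}$ and $U=P_{i}\cap N\cap L$, you overlook that $U$ is an intersection of three subgroups each normal in $P$ (the maximal subgroup $P_{i}$ of the $p$-group $P$, and the $G$-normal subgroups $N$ and $L$), so $U\unlhd P$ and hence $p\nmid |G:N_{G}(U)|$. If $U\ne 1$ the $\Pi$-property makes $|G:N_{G}(U)|$ a $p$-number, so $N_{G}(U)=G$ and $U\unlhd G$, contradicting the minimality of $L$; thus $U=1$ and $|L|=|L:L\cap P_{i}|=|P:P_{i}|=p$. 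This one-line observation is exactly the paper's Step 3 and renders the whole Clifford-theoretic detour superfluous. Worse, that detour is not actually a proof: after showing $L$ restricted to $O_{p'}(G/C_{G}(L))$ is a sum of one-dimensional modules, you assert that intersecting the hyperplanes $P_{i}\cap L$ and invoking $\bigcap_{i}P_{i}=\Phi(P)$ ``forces $n=1$''. But $\bigcap_{i}(P_{i}\cap L)=\Phi(P)\cap L$ need not be trivial (you only arranged $\Phi(G)=1$, and $\Phi(P)\cap L$ is merely normal in $P$), and even if it were trivial no contradiction with $n\ge 2$ is derived -- your own $\mathrm{GL}(3,3)$ example shows that monomial irreducibility does not bound the dimension. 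The decisive step is therefore missing.

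Two further gaps. First, your reduction to $\Phi(G)=1$ rests on the claim that a minimal normal $p$-subgroup $L\le\Phi(G)$ satisfies $L\le\Phi(P)$; the cited direction of Doerk--Hawkes A.9.2(d) is the converse ($L\le\Phi(P)$ and $L\unlhd G$ imply $L\le\Phi(G)$), and without $L\le\Phi(P)$ the quotients $P_{i}/L$ are not even defined. The paper instead reduces to $\Phi(P)_{G}=1$, which is what the argument genuinely needs, and only later deduces $O_{p}(G)\cap\Phi(G)=1$ via the complement lemma. Second, your final descent is broken: for $|L|=p$ with $L\nleq\Phi(P)$, the members of $\MM_{d}(P)$ containing $L$ need not be $d-1$ in number and need not intersect in $\Phi(P)L$ (take $P$ elementary abelian of rank $d$ with $\MM_{d}(P)$ the kernels of a dual basis and $L$ spanned by a vector with two nonzero coordinates), so their images need not form an $\MM_{d-1}(P/L)$. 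No such descent is needed: once every minimal normal subgroup inside $O_{p}(G)$ has order $p$, the paper uses $\Phi(P)_{G}=1$ and Lemma 2.3 to get $O_{p}(G)\cap\Phi(G)=1$, decomposes $O_{p}(G)$ as a direct product of normal subgroups of order $p$, and concludes directly from $C_{G}(O_{p}(G))=O_{p}(G)$ that $G$ is $p$-supersoluble.
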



  \begin{theoremA}\label{second}
  Let $ P $ be a Sylow $ p $-subgroup of a group $ G $ for some prime $ p\in \pi(G) $. Assume that $ N $ is a normal subgroup of $ G $ such that $ G/N $ is $ p $-supersoluble. Then $ G $ is $ p $-nilpotent if and only if $ N_{G}(P) $ is $ p $-nilpotent and  $ P_{i}\cap N $ satisfies the $ \Pi $-property in $ G $ for every member $ P_{i} $ of some fixed $ \MM_{d}(P) $.
  \end{theoremA}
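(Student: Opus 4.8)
The plan is to prove the two implications separately. Necessity is immediate: if $G$ is $p$-nilpotent then $N_{G}(P)$ is $p$-nilpotent as a subgroup of a $p$-nilpotent group, and every $p$-subgroup $H$ of a $p$-nilpotent group satisfies the $\Pi$-property — for a chief factor $L/K$ of order prime to $p$ the section $HK/K\cap L/K$ is trivial, while for a $p$-chief factor $L/K$ the normal $p$-complement of $G/K$ centralises $L/K$, so $N_{G/K}(HK/K\cap L/K)$ contains it together with $HK/K$ and therefore has $p$-power index. So the work is in the converse, which I would prove by taking a counterexample $G$ of least order.

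First I would reduce to $O_{p'}(G)=1$. In $\bar G=G/O_{p'}(G)$ the image of $N_{G}(P)$ is $N_{\bar G}(\bar P)$ (Frattini argument), hence $p$-nilpotent as a quotient of $N_{G}(P)$; $\bar G/\bar N$ is $p$-supersoluble as a quotient of $G/N$; and since $|NO_{p'}(G)|_{p}=|N|_{p}$ gives $P\cap NO_{p'}(G)=P\cap N$, one has $\bar P_{i}\cap\bar N=(P_{i}\cap N)O_{p'}(G)/O_{p'}(G)$, which inherits the $\Pi$-property by its descent to quotients. Minimality then makes $\bar G$, and hence $G$, $p$-nilpotent, a contradiction; so $O_{p'}(G)=1$, and also $N\ne 1$ (otherwise $G$ itself is $p$-supersoluble and the last paragraph below applies directly). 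The heart of the argument is to show $G$ is $p$-soluble. Pick a minimal normal subgroup $R$ of $G$ with $R\le N$; since $O_{p'}(G)=1$, $R$ is either an elementary abelian $p$-group or a direct product of non-abelian simple groups each of order divisible by $p$, and I must exclude the second case.

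Suppose then that $R$ is non-abelian. Applying the $\Pi$-property of $P_{i}\cap N$ to the chief factor $R/1$ shows $|G:N_{G}(P_{i}\cap R)|$ is a $p$-number for each $i$. Because $P_{i}\trianglelefteq P$ we get $P_{i}\cap R\trianglelefteq P\cap R$, so $N_{R}(P_{i}\cap R)$ contains the Sylow $p$-subgroup $P\cap R$ of $R$ and $|R:N_{R}(P_{i}\cap R)|$ is a $p'$-number dividing a $p$-number, hence equals $1$; thus $P_{i}\cap R\trianglelefteq R$, forcing $P_{i}\cap R\le O_{p}(R)=1$ unless $P_{i}\cap R=P\cap R$. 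But $P_{i}\cap R=P\cap R$ for some $i$ would give $P\cap R\trianglelefteq R$ and then $P\cap R=1$, contradicting $p\mid|R|$. So $P_{i}\cap R=1$ for every $i$, whence the restriction of $P\to P/P_{i}$ embeds $P\cap R$, giving $|P\cap R|\le|P:P_{i}|=p$; therefore $|R|_{p}=p$ and $R=S$ is a non-abelian simple group with $W:=P\cap R$ of order $p$, $W\cap P_{i}=1$ for all $i$, and $W\not\le\Phi(P)$. Disposing of this configuration is the step I expect to be hardest, and it is where the hypothesis ``$N_{G}(P)$ is $p$-nilpotent'' is indispensable: since $\operatorname{Aut}(W)$ is a $p'$-group, $W\le\Z(P)$, so $P$ is a Sylow $p$-subgroup of the proper subgroup $N_{G}(W)\supseteq N_{G}(P)$; when $P$ is abelian (e.g.\ $P$ cyclic), $N_{G}(P)$ being $p$-nilpotent forces $P\le\Z(N_{G}(P))$ and Burnside's normal $p$-complement theorem makes $G$ $p$-nilpotent, a contradiction, and in general I would run a transfer argument inside $N_{G}(W)$ using $W\le\Z(P)$ and the $p$-nilpotency of $N_{G}(P)$ to reach the same contradiction.

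Hence $R$ is a $p$-group; passing to $G/R$ (which again satisfies all three hypotheses — the one delicate point being a choice of $\MM_{d}(PR/R)$ compatible with the $P_{i}$, provided by a preliminary lemma and automatic when $R\le\Phi(P)$), minimality gives $G/R$ $p$-nilpotent. Either way $G$ is $p$-soluble, so by Theorem~\ref{first} $G$ is $p$-supersoluble. Finally, as $O_{p'}(G)=1$ and $G$ is $p$-soluble, $V:=\F(G)=O_{p}(G)\ne 1$ and $\C_{G}(V)\le V$; by $p$-supersolubility each $G$-chief factor inside $V$ has order $p$, so $G$ acts on it through a subgroup of $\mathbb Z_{p-1}$, a $p'$-group, and the intersection $D$ of the centralisers of a $G$-chief series of $V$ has $p'$-index in $G$ while $D$ stabilises that series, so $D/\C_{D}(V)$ is a $p$-group and $\C_{D}(V)=\C_{G}(V)\le V$; thus $D$ is a $p$-group, $D=V$, and $P=V=O_{p}(G)\trianglelefteq G$. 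Then $N_{G}(P)=G$, so the hypothesis that $N_{G}(P)$ is $p$-nilpotent says precisely that $G$ is $p$-nilpotent — contradicting the choice of $G$ and finishing the proof.
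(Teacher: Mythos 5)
Your necessity argument, the reduction to $O_{p'}(G)=1$, and the analysis showing that a non-abelian minimal normal subgroup $R\le N$ would have to satisfy $P_i\cap R=1$ for all $i$ and $|P\cap R|=p$ all match the paper and are sound; your closing paragraph (deducing $P=O_p(G)\trianglelefteq G$ from $p$-supersolubility together with $O_{p'}(G)=1$, and then invoking the $p$-nilpotency of $N_G(P)=G$) is also a legitimate endgame. But there are two genuine gaps, and the first is the decisive one. In the configuration $W=P\cap R\le\Z(P)$, $|W|=p$, $R$ non-abelian, you dispose of the case where $P$ is abelian via Burnside and then write that ``in general I would run a transfer argument inside $N_G(W)$.'' That is precisely the hard step, and it is left undone; moreover it is not clear that any direct transfer computation closes it, since to apply Burnside to $R$ one would need $W\le\Z(N_R(W))$, i.e.\ $N_R(W)=\C_R(W)$, and neither $W\le\Z(P)$ nor the $p$-nilpotency of $N_G(P)$ visibly forces the $p'$-group $N_R(W)/\C_R(W)\le\mathrm{Aut}(W)$ to be trivial. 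The paper instead exploits the relative form of the hypothesis: it shows that the proper subgroups $L=PK=P_iK$ and then $N_G(P\cap K)=P_i\bigl(K\cap N_G(P\cap K)\bigr)$ inherit the full hypotheses of the theorem (with $K$, respectively $K\cap N_G(P\cap K)$, playing the role of $N$, all relevant intersections $P_j\cap K$ being trivial), so the minimal choice of $G$ makes them $p$-nilpotent, and Burnside is then applied inside $K$ via $P\cap K\le\Z(N_K(P\cap K))$. This is the reason the theorem carries the normal subgroup $N$ at all; your sketch never applies the inductive hypothesis to proper subgroups, and without that I do not see how your plan terminates.

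The second gap is the passage to $G/R$ for a minimal normal $p$-subgroup $R$. If $R\not\le\Phi(P)$ then $R\not\le P_j$ for some $j$, so $P_jR/R=P/R$ is not a maximal subgroup of $P/R$ and the images of the fixed family $\MM_d(P)$ do not form a set $\MM_{d'}(P/R)$; the ``preliminary lemma'' you appeal to is neither stated nor proved, and it is not a routine fact. The paper sidesteps this entirely: it only factors out minimal normal subgroups lying in $\Phi(P)_G$ (where the images do work and $V\le\Phi(G)$ lets the saturation of the formation finish), and instead of proving $p$-solubility by induction it shows directly that every minimal normal subgroup of $G$ lies in $O_p(G)$ and has order $p$, concluding with a complement argument rather than with Theorem~\ref{first}. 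As written, your derivation of the $p$-solubility of $G$, and hence your appeal to Theorem~\ref{first}, does not go through.
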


  \begin{theoremA}\label{third}
 Let $ p $ be a prime dividing the order of a group $ G $  with $ (|G|, p-1) = 1 $ and $ P \in {\rm Syl}_{p}(G) $. Assume that $ N $ is a normal subgroup of $ G $ such that $ G/N $ is $ p $-nilpotent. Then $ G $ is $ p $-nilpotent if and only if $ P_{i}\cap N $ satisfies the $ \Pi $-property in $ G $ for every member $ P_{i} $ of some fixed  $ \MM_{d}(P) $.
  \end{theoremA}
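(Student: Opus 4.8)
The plan is to prove the two implications separately; the forward direction is short, and the real work is the converse. Suppose first that $G$ is $p$-nilpotent, with normal $p$-complement $T=O_{p'}(G)$. The key remark is that every $p$-chief factor $L/K$ of $G$ has order $p$: indeed $L/K\cap TK/K=1$, so $L/K$ embeds into the $p$-group $(G/K)/(TK/K)\cong G/TK$ as a minimal normal subgroup, and a minimal normal subgroup of a $p$-group has order $p$. Consequently, for the $p$-subgroup $H=P_i\cap N$ and any chief factor $L/K$ of $G$, the section $HK/K\cap L/K$ is trivial when $L/K$ is a $p'$-factor, and equals $1$ or $L/K$ when $L/K$ is a $p$-factor; in all cases it is normal in $G/K$, so $|G/K:N_{G/K}(HK/K\cap L/K)|=1$. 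Hence $P_i\cap N$ satisfies the $\Pi$-property in $G$, for every possible choice of $\MM_d(P)$.

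For the converse, suppose the statement fails and let $G$ be a counterexample of least order. Using the preparatory lemmas on the behaviour of the $\Pi$-property and of the sets $\MM_d(\cdot)$ under passage to quotients, one makes the usual reductions: $N\neq 1$ (else $G=G/N$ is $p$-nilpotent); $N$ is not a $p'$-group (else pulling back the normal $p$-complement of $G/N$ through $N$ gives a normal $p$-complement of $G$), so $p\mid|N|$; and $O_{p'}(G)=1$ (else $G/O_{p'}(G)$ inherits all the hypotheses and has smaller order, hence is $p$-nilpotent, and then so is $G$ since $O_{p'}(G)$ is a $p'$-group). Fix a minimal normal subgroup $M\leq N$ of $G$; then $p\mid|M|$. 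Next I claim $M$ is elementary abelian. If not, $M=S_1\times\cdots\times S_k$ with the $S_i$ nonabelian simple and $p\mid|S_i|$; since $M$ is a chief factor and $M\leq N$, the $\Pi$-property of $P_j\cap N$ forces $|G:N_G(P_j\cap M)|$ to be a power of $p$ for every $P_j\in\MM_d(P)$. If some $P_j$ contains the Sylow $p$-subgroup $P_0=P\cap M$, then $|G:N_G(P_0)|$ is a $p$-power, hence so is $|M:N_M(P_0)|$; but the latter is $\equiv1\ (\mathrm{mod}\ p)$ by Sylow, so $P_0\trianglelefteq M$, contradicting $O_p(M)=1$. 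Otherwise each $P_j\cap M$ has index $p$ in $P_0$, a short count gives $|M:N_M(P_j\cap M)|\in\{1,p\}$, and inspecting the core of $N_M(P_j\cap M)$ in $M$ (which embeds a nonempty product of the $S_i$ into the symmetric group on $p$ letters) forces a Sylow $p$-subgroup of some $S_i$ to have order $p$; Burnside's normal $p$-complement theorem together with $(|G|,p-1)=1$ then contradicts the simplicity of $S_i$. So $M$ is an elementary abelian $p$-group, and $M\leq O_p(G)\leq P$.

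Since $M\leq N$, Dedekind's law gives $(P_iM/M)\cap(N/M)=(P_i\cap N)M/M$, and by the preparatory lemmas these subgroups satisfy the $\Pi$-property in $G/M$ with respect to a suitable $\MM_{d'}(PM/M)$; as $(G/M)/(N/M)\cong G/N$ is $p$-nilpotent and $(|G/M|,p-1)=1$, minimality makes $G/M$ $p$-nilpotent. If $M\not\leq\Phi(P)$, choose $P_j\in\MM_d(P)$ with $M\not\leq P_j$: if $W:=P_j\cap M\neq 1$ then $W\trianglelefteq P$ (because $M\trianglelefteq P$, $M$ is abelian and $P_jM=P$), so $|G:N_G(W)|$ is simultaneously a power of $p$ (by the $\Pi$-property for the chief factor $M$) and prime to $p$ (as $N_G(W)\supseteq P$), whence $W\trianglelefteq G$ --- impossible by minimality of $M$; thus $|M|=p$, $(|G|,p-1)=1$ forces $M\leq Z(G)$, and Burnside's normal $p$-complement theorem applied to the central $p$-subgroup $M$ of $G$ (with $G/M$ $p$-nilpotent) gives that $G$ is $p$-nilpotent, a contradiction. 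If $M\leq\Phi(P)$, let $K/M$ be the normal $p$-complement of $G/M$; then $K\trianglelefteq G$, $M$ is the normal Sylow $p$-subgroup of $K$, and $K=M\rtimes U$ with $U$ a Hall $p'$-subgroup of $G$, so the Frattini argument gives $G=M\,N_G(U)$. One checks $M\leq\Phi(G)$ (otherwise $M$ is complemented by a maximal subgroup, so $G=M\rtimes E$, which forces $M\not\leq\Phi(P)$), hence $G=\Phi(G)N_G(U)=N_G(U)$, so $U\trianglelefteq G$, $U\leq O_{p'}(G)=1$, and $G$ is a $p$-group --- again a contradiction. This completes the proof.

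The step I expect to be the main obstacle is legitimising the passage to $G/M$: one must carry both the $\Pi$-property and the $\MM_d$-condition to the quotient, and when $M\not\leq\Phi(P)$ the smallest generator number drops and the images $P_iM/M$ need not themselves form an $\MM_{d'}(PM/M)$, so this rests entirely on the preparatory lemmas about $\MM_d(\cdot)$ (in the spirit of Li--He). A subsidiary issue is arranging the dichotomy $M\leq\Phi(P)$ versus $M\not\leq\Phi(P)$ so that exactly one of the two short closing arguments --- the Frattini argument, or the observation that $N_G(W)\supseteq P$ makes $|G:N_G(W)|$ both a power of $p$ and prime to $p$ --- always applies.
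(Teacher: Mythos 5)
Your forward direction and your initial reductions ($O_{p'}(G)=1$, $p\mid |N|$, the elimination of nonabelian minimal normal subgroups $M\leq N$) are essentially sound, though the nonabelian case is handled more awkwardly than necessary: the ``short count'' giving $|M:N_M(P_j\cap M)|\in\{1,p\}$ and the inspection of the core of $N_M(P_j\cap M)$ are unjustified detours. Since $P_j\unlhd P$ and $M\unlhd G$, one has $P_j\cap M\unlhd P$, so the $\Pi$-property makes $|G:N_G(P_j\cap M)|$ simultaneously a $p$-number and a $p'$-number; hence $P_j\cap M\unlhd G$, minimality forces $P_j\cap M=1$, and $|P\cap M|=p$ follows at once, after which Burnside and $(|G|,p-1)=1$ finish that case.

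The genuine gap is exactly the one you flag at the end and defer to ``preparatory lemmas about $\MM_d(\cdot)$'': the passage to $G/M$ when $M\not\leq\Phi(P)$. No such lemma exists in this paper or in Li--He, and none can in the form you need: if $M\not\leq P_j$ for some $j$, then $P_jM/M=P/M$ is the whole Sylow $p$-subgroup of $G/M$, so the family $\{P_iM/M\}$ is not a subset of $\MM_{d'}(P/M)$ and the hypothesis of the theorem does not descend to $G/M$. Your treatment of the case $M\not\leq\Phi(P)$ needs $G/M$ to be $p$-nilpotent (you use it to pass from $|M|=p$ and $M\leq Z(G)$ to the $p$-nilpotency of $G$), so the argument breaks precisely at its load-bearing step; knowing only that $M$ is central of order $p$ gives no contradiction by itself. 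The paper avoids this entirely: it forms quotients only by minimal normal subgroups inside $O_{p'}(G)$ or inside $\Phi(P)_G$ (where $V\leq\Phi(P)\leq P_i$ for every $i$, so $\{P_i/V\}$ is a legitimate $\MM_d(P/V)$), uses $\Phi(P)_G=1$ to show that \emph{every} minimal normal subgroup of $G$ has order $p$, and then closes not by induction on $G/M$ but by a direct argument: each minimal normal $T_i$ is complemented in $P$ and hence in $G$ by Lemma \ref{complemented}, and is central since $(|G|,p-1)=1$; a minimal supplement $Y$ to $T_1\cdots T_s$ satisfies $O_p(G)\cap Y=1$, forcing $O_p(G)=T_1\cdots T_s\leq Z(G)$ and $1<Y\unlhd G$, yet $Y$ must contain a minimal normal subgroup, which lies in $O_p(G)$ --- a contradiction. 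To repair your proof you would have to replace the induction on $G/M$ in the case $M\not\leq\Phi(P)$ by an argument of this kind.
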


   The conditions that $ G $ is $ p $-soluble in Theorem \ref{first}, $ N_{G}(P) $ is $ p $-nilpotent in Theorem \ref{second}, and $ (|G|, p-1)=1 $ in Theorem \ref{third} cannot be removed. Let us see the following example.

   \begin{example}
   	Let $ A=\langle a|a^{p}=1\rangle $ be a cyclic group of order $ p $, where $ p $ is a prime. Let $ B $ be a non-abelian simple group such that the Sylow $ p $-subgroups of $ B $ have order $ p $. Let $ b $ be an element of $ B $ of order $ p $. Set $ G=A\times B $. Let $ P $ be a Sylow $ p $-subgroup of $ G $ such that $ a, b\in P $. Write $ P_{1}=A $ and $ P_{2}=\langle ab\rangle $. It is easy to see that $ P_{i}\cap B $ satisfies the $ \Pi $-property in $ G $ for any $ i=1, 2 $. However, $ G $ is not $ p $-supersoluble or $ p $-nilpotent.
   \end{example}

\section{Preliminaries}

In this section, we collect some results which are needed in the proofs.

\begin{lemma}[{\cite[Proposition 2.1(1)]{li-2011}}]\label{over}
	Let $ H $ be a subgroup of a group $ G $ and $ N $ a normal subgroup of a group $ G $. If $ H $ satisfies the $ \Pi $-property in $ G $, then $ HN/N $ satisfies the $ \Pi $-property in $ G/N $.
\end{lemma}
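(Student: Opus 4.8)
The plan is to reduce everything to the correspondence theorem. Write $\overline{G}=G/N$ and $\overline{H}=HN/N$, and let $\overline{L}/\overline{K}$ be an arbitrary chief factor of $\overline{G}$. By the correspondence theorem there are subgroups $K,L$ of $G$ with $N\le K< L\le G$, with $K$ and $L$ normal in $G$, such that $\overline{K}=K/N$, $\overline{L}=L/N$, and $L/K$ is a chief factor of $G$: indeed $\overline{L}/\overline{K}$ is a minimal normal subgroup of $(G/N)/(K/N)$, and this quotient is canonically isomorphic to $G/K$, so $L/K$ is a minimal normal subgroup of $G/K$. Thus every chief factor of $G/N$ arises, via pre-images, from a chief factor $L/K$ of $G$ lying above $N$.

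Next I would track the relevant subgroup through the canonical isomorphism $\theta\colon (G/N)/(K/N)\to G/K$. Since $N\le K$ we have $HNK=HK$, hence $\overline{H}\,\overline{K}=(HN/N)(K/N)=HK/N$, and therefore $\theta$ maps $\overline{H}\,\overline{K}/\overline{K}$ onto $HK/K$ and $\overline{L}/\overline{K}$ onto $L/K$; consequently it maps $\overline{H}\,\overline{K}/\overline{K}\cap\overline{L}/\overline{K}$ onto $HK/K\cap L/K$. In particular $\theta$ restricts to an isomorphism between $N_{\overline{G}/\overline{K}}(\overline{H}\,\overline{K}/\overline{K}\cap\overline{L}/\overline{K})$ and $N_{G/K}(HK/K\cap L/K)$, so the two normalizer indices are equal; moreover $\pi(\overline{H}\,\overline{K}/\overline{K}\cap\overline{L}/\overline{K})=\pi(HK/K\cap L/K)$ because the two groups are isomorphic.

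Finally, since $H$ satisfies the $\Pi$-property in $G$ and $L/K$ is a chief factor of $G$, the index $|G/K:N_{G/K}(HK/K\cap L/K)|$ is a $\pi(HK/K\cap L/K)$-number; by the equalities just obtained, $|\overline{G}/\overline{K}:N_{\overline{G}/\overline{K}}(\overline{H}\,\overline{K}/\overline{K}\cap\overline{L}/\overline{K})|$ is a $\pi(\overline{H}\,\overline{K}/\overline{K}\cap\overline{L}/\overline{K})$-number. As $\overline{L}/\overline{K}$ was an arbitrary chief factor of $G/N$, this shows $HN/N$ satisfies the $\Pi$-property in $G/N$. I do not expect a genuine obstacle here: the only points needing (routine) care are the identification of the chief factors of $G/N$ with the chief factors of $G$ lying above $N$, and the identity $\overline{H}\,\overline{K}/\overline{K}\cong HK/K$, which is precisely where the hypothesis $N\le K$ is used.
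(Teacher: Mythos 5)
Your proof is correct. The paper itself gives no argument for this lemma---it simply cites \cite[Proposition 2.1(1)]{li-2011}---and your correspondence-theorem argument is precisely the standard proof of that result: the chief factors of $G/N$ are exactly the chief factors $L/K$ of $G$ with $N\le K$, and the canonical isomorphism $(G/N)/(K/N)\cong G/K$ carries $\bigl((HN/N)(K/N)/(K/N)\bigr)\cap\bigl((L/N)/(K/N)\bigr)$ to $HK/K\cap L/K$ (here $HNK=HK$ since $N\le K$), so it preserves both the normalizer index and the relevant set of primes, and the $\Pi$-property hypothesis for $H$ in $G$ transfers verbatim to $HN/N$ in $G/N$.
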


\begin{lemma}\label{p-one}
	Let $ p $ be a prime dividing the order of a group $ G $ and $ P_{1} $ be a $ p $-subgroup of $ G $. Let $ L\unlhd G $ and $ N $ be a normal $ p' $-subgroup of $ G $. Then $ P_{1}N/N \cap LN/N = (P_{1} \cap L)N/N $.
\end{lemma}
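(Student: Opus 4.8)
The plan is to remove the quotient by $N$, reduce the statement to an equality of subgroups of $G$, and then close it using Dedekind's modular law together with a coprimality argument.

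First, the inclusion $(P_1\cap L)N/N\subseteq (P_1N/N)\cap(LN/N)$ is immediate from $P_1\cap L\le P_1$ and $P_1\cap L\le L$, so the whole content lies in the reverse inclusion. Since $N\unlhd G$ and $L\unlhd G$, the products $P_1N$ and $LN$ are subgroups of $G$ containing $N$ (indeed $LN\unlhd G$), and the correspondence theorem gives $(P_1N/N)\cap(LN/N)=(P_1N\cap LN)/N$. Thus it suffices to prove the reverse inclusion $P_1N\cap LN\subseteq (P_1\cap L)N$. Applying the modular law with $N\le LN$ and using $P_1N=NP_1$, one gets $P_1N\cap LN=N(P_1\cap LN)=(P_1\cap LN)N$, so everything reduces to the single claim $P_1\cap LN\subseteq L$; for then $P_1\cap LN=P_1\cap L$ and we are done.

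This last claim is where the hypotheses on $p$ enter, and it is the only step that is not pure bookkeeping. Let $x\in P_1\cap LN$. As $x$ lies in the $p$-group $P_1$, it is a $p$-element of $G$. On the other hand $LN/L\cong N/(L\cap N)$ is a homomorphic image of the $p'$-group $N$, hence a $p'$-group; therefore the image of $x$ in $LN/L$ has order dividing both a power of $p$ and the $p'$-number $|LN/L|$, so it is trivial and $x\in L$, as needed. I do not anticipate any real obstacle here; the only points to keep track of are that the modular law must be applied so as to pull $N$ out of the intersection (which needs $N\le LN$), and that the $p'$-hypothesis on $N$ is used precisely to force $LN/L$ to be a $p'$-group. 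An alternative is to work in $\bar G=G/N$, noting that $P_1\cap N=1$ by coprimality so that $P_1\to P_1N/N$ is an isomorphism; this route leads to the same coprimality step and is equally short.
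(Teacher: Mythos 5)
Your proof is correct. The paper states this lemma without proof (treating it as routine), and your argument --- reducing via the correspondence theorem to $P_{1}N\cap LN=(P_{1}\cap L)N$, pulling $N$ out by Dedekind's modular law, and then using that $LN/L\cong N/(L\cap N)$ is a $p'$-group to force $P_{1}\cap LN\leq L$ --- is exactly the standard verification one would supply, with every step (including the normality of $L$ in $LN$ and the coprimality of orders) correctly justified.
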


\begin{lemma}[{\cite[Kapitel I, Hauptsatz 17.4]{Huppert-1967}}]\label{complemented}
	Suppose that $ N $ is an abelian normal subgroup of a group $ G $ and $ N \leq M \leq G $ such that $ (|N|, |G:M|)=1 $. If $ N $ is complemented in $ M $, then $ N $ is complemented in $ G $.
\end{lemma}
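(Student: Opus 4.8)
This is (one direction of) Gaschütz's classical theorem on complements of abelian normal subgroups, so the plan is to reproduce the standard cohomological argument. I would set $\bar G = G/N$ and $\bar M = M/N$; these make sense because $N$ is normal in $G$, hence in $M$, and $|\bar G : \bar M| = |G:M|$. Since $N$ is abelian, conjugation makes $N$ a $\bar G$-module, and the extension $1 \to N \to G \to \bar G \to 1$ carries a well-defined class $\omega \in H^2(\bar G, N)$ that vanishes if and only if $N$ admits a complement in $G$. Restricting this extension along $\bar M \hookrightarrow \bar G$ produces the extension $1 \to N \to M \to \bar M \to 1$, whose class is the image $\mathrm{res}(\omega)$ under the restriction map $\mathrm{res}\colon H^2(\bar G, N) \to H^2(\bar M, N)$.

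The hypothesis that $N$ is complemented in $M$ says precisely that $\mathrm{res}(\omega) = 0$. I would then invoke the corestriction (transfer) map $\mathrm{cor}\colon H^2(\bar M, N) \to H^2(\bar G, N)$, which satisfies $\mathrm{cor}\circ\mathrm{res} = |G:M|\cdot\mathrm{id}$, to deduce $|G:M|\,\omega = \mathrm{cor}(\mathrm{res}(\omega)) = 0$. Separately, multiplication by $|N|$ on the coefficient module $N$ is the zero endomorphism, hence induces the zero map on $H^2(\bar G, N)$, so $|N|\,\omega = 0$ as well. Since $(|N|,|G:M|)=1$, writing $1 = a|N| + b|G:M|$ with $a,b\in\mathbb{Z}$ gives $\omega = a|N|\,\omega + b|G:M|\,\omega = 0$; thus the extension splits and $N$ has a complement in $G$.

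If one prefers to stay inside elementary group theory, the same result follows from the classical averaging proof: fix a complement $K$ of $N$ in $M$ and a right transversal $R$ of $M$ in $G$, observe that $KR$ is then a right transversal of $N$ in $G$, encode the multiplication of $G$ relative to this transversal by a factor set with values in $N$ that vanishes on $K\times K$, and correct the transversal by the function obtained by averaging the factor set over $R$ — legitimate because $n\mapsto n^{|G:M|}$ is a bijection of $N$ thanks to $(|N|,|G:M|)=1$ — so that the corrected transversal becomes a subgroup, hence a complement of $N$ in $G$. In either route the only delicate point is the bookkeeping that repackages ``$N$ is complemented in $M$'' as the vanishing of the relevant restriction (respectively, the vanishing of the factor set on $K\times K$); once that is set up, the coprimality hypothesis finishes the argument mechanically.
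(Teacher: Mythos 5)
Your argument is correct: the lemma is exactly Gasch\"utz's splitting theorem, and the paper gives no proof of its own, simply citing Huppert, Kapitel I, Hauptsatz 17.4, whose proof is the factor-set averaging argument you sketch in your last paragraph (your restriction--corestriction version in $H^2(\bar G,N)$ is the standard cohomological repackaging of the same computation). So your proposal matches the cited source's approach, and both of your routes are complete in the essentials.
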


\begin{lemma}\label{p-supersoluble}
	Let $ H $ be a  $ p $-subgroup of a group $ G $ for some prime $ p\in \pi(G) $. If $ G $ is $ p $-supersoluble, then $ H $ satisfies the $ \Pi $-property in $ G $.
\end{lemma}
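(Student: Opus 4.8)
The plan is to verify the defining condition of the $\Pi$-property directly, exploiting the fact that in a $p$-supersoluble group every chief factor whose order is divisible by $p$ is cyclic of order $p$. Fix an arbitrary chief factor $L/K$ of $G$ and put $X := HK/K \cap L/K = (HK \cap L)/K$, a subgroup of $L/K$. Since $H$ is a $p$-group, $HK/K$ is a $p$-group, and therefore so is $X$. I claim that $|G/K : N_{G/K}(X)| = 1$, which in particular is a $\pi(X)$-number, as required.

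To prove the claim, I would distinguish two cases. If $p \nmid |L/K|$, then $X$ is a $p$-subgroup of a $p'$-group, so $X = 1$ and $N_{G/K}(X) = G/K$. If $p \mid |L/K|$, then $p$-supersolubility of $G$ forces $|L/K| = p$; hence the subgroup $X$ of $L/K$ is either trivial or equal to $L/K$. In the former subcase $N_{G/K}(X) = G/K$ again; in the latter, $X = L/K$ is normal in $G/K$ since $L \unlhd G$, so once more $N_{G/K}(X) = G/K$. In every case the index is $1$.

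Since the chief factor $L/K$ was arbitrary and $1$ is a $\pi$-number for every set of primes $\pi$ (it has no prime divisors), $H$ satisfies the $\Pi$-property in $G$. There is essentially no obstacle here: the only points requiring care are recalling the standard chief-factor characterization of $p$-supersolubility and observing that the presence of the $p$-group $H$ collapses each intersection $HK/K \cap L/K$ to a subgroup of $G/K$ that is automatically normal, making the relevant index trivial.
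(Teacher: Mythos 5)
Your proof is correct, but it is not the route the paper takes: the paper disposes of this lemma in one line by citing Lemma 2.3 of Su, Li and Wang (\emph{J. Algebra} 400 (2014)), whereas you give a direct, self-contained verification. Your argument is sound: $HK/K\cong H/(H\cap K)$ is a $p$-group, hence so is $X=(HK\cap L)/K$; since a $p$-supersoluble group is $p$-soluble, every chief factor $L/K$ is either a $p'$-group or a $p$-group of order $p$, and in the first case $X=1$, while in the second $X$ is either trivial or all of $L/K$, so in every case $X\unlhd G/K$ and the index $|G/K:N_{G/K}(X)|$ equals $1$, which is trivially a $\pi(X)$-number (even when $\pi(X)=\emptyset$). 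What your approach buys is transparency and independence from the literature --- it actually proves something slightly stronger, namely that the relevant normalizer is all of $G/K$, not merely of $\pi(X)$-index; what the paper's citation buys is brevity and the ability to lean on a reference where this and related facts about the $\Pi$-property are established once and reused. The only point worth making explicit in your write-up is that the step ``$p\mid|L/K|$ forces $|L/K|=p$'' uses $p$-solubility (built into the definition of $p$-supersolubility) to know that $L/K$ is a $p$-group before invoking the order-$p$ condition on $p$-chief factors; as stated it is fine, but spelling this out removes any ambiguity about which definition of $p$-supersoluble you are using.
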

\begin{proof}[\bf{Proof}]
	By \cite[Lemma 2.3]{Su-2014}, the conclusion holds.
\end{proof}

\begin{lemma}[{\cite[Lemma 2.1(6)]{Li-Miao-2017}}]\label{cap}
	Let $ p $ be a prime dividing the order of a group $ G $, $ H $ be a $ p $-subgroup of $ G $ and $ N \unlhd G $. If $ H $ satisfies the $ \Pi $-property in $ G $, then $ H\cap N $ satisfies the $ \Pi $-property in $ G $.
\end{lemma}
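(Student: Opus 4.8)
The plan is to verify the defining property one chief factor at a time. Fix a chief factor $L/K$ of $G$ and set $K_{0}=K\cap N$ and $L_{0}=L\cap N$, both normal in $G$, with $K_{0}\leq K$ and $K_{0}\leq L_{0}$. Since $L/K$ is a minimal normal subgroup of $G/K$ while $NK/K$ is normal in $G/K$, exactly one of two situations occurs: either $L\cap NK=K$, or $L\leq NK$. In the former, $(H\cap N)K/K\cap L/K\leq NK/K\cap L/K=1$, so the subgroup in question is trivial, its normalizer in $G/K$ is all of $G/K$, and the relevant index equals $1$, which is a $\pi$-number for every set of primes; so this case is done at once.

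Assume henceforth $L\leq NK$. Then the modular law yields $K(N\cap L)=KN\cap L=L$, so $L=KL_{0}$, and I claim $L_{0}/K_{0}$ is again a chief factor of $G$. Indeed, the inclusion $L_{0}\hookrightarrow L$ induces a map $L_{0}/K_{0}\to L/K$ that is injective (because $L_{0}\cap K=K_{0}$), surjective (because $L=L_{0}K$), and $G$-equivariant for the conjugation actions; hence it is an isomorphism of $G$-groups, and so it matches up $G$-invariant subgroups. As $L/K$ has no $G$-invariant subgroups other than the trivial ones, neither does $L_{0}/K_{0}$, and since $L_{0}/K_{0}$ is normal in $G/K_{0}$ it is a chief factor of $G$. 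Now invoke the hypothesis that $H$ satisfies the $\Pi$-property, applied to this chief factor: $|G/K_{0}\colon N_{G/K_{0}}(HK_{0}/K_{0}\cap L_{0}/K_{0})|$ is a $\pi(HK_{0}/K_{0}\cap L_{0}/K_{0})$-number.

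It remains to carry this back to $L/K$ along the natural epimorphism $\phi\colon G/K_{0}\twoheadrightarrow G/K$ (available since $K_{0}\leq K$). Two more applications of the modular law show that both $HK_{0}/K_{0}\cap L_{0}/K_{0}$ and $(H\cap N)K/K\cap L/K$ are isomorphic to $D/E$, where $D=H\cap N\cap L$ and $E=H\cap N\cap K$; in particular the two associated prime sets coincide. Also $\phi$ maps $HK_{0}/K_{0}\cap L_{0}/K_{0}$ onto $(H\cap N)K/K\cap L/K$, hence maps the normalizer of the first into the normalizer of the second, and a routine index computation then shows that $|G/K\colon N_{G/K}((H\cap N)K/K\cap L/K)|$ divides $|G/K_{0}\colon N_{G/K_{0}}(HK_{0}/K_{0}\cap L_{0}/K_{0})|$. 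Together with the previous step this settles the second case and proves the lemma.

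The one step I expect to require real thought is the substitution made at the start of the second case: one should apply the $\Pi$-property of $H$ not to $L/K$ but to the chief factor $(L\cap N)/(K\cap N)$. A direct attempt to compare $(H\cap N)K\cap L$ with $HK\cap L$ runs aground because $HK\cap NK$ is, in general, strictly larger than $(H\cap N)K$; passing to the chief factor sitting inside $N$ sidesteps this, after which only Dedekind's law and elementary index bookkeeping remain.
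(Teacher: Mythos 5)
Your argument is correct. Note that the paper itself does not prove this lemma; it simply cites \cite[Lemma 2.1(6)]{Li-Miao-2017}, so there is no in-paper proof to compare against, but your write-up is a complete, self-contained verification. The key steps all check out: the dichotomy $L\cap NK=K$ or $L\leq NK$ for the chief factor $L/K$; the $G$-isomorphism $(L\cap N)/(K\cap N)\cong L/K$ showing $(L\cap N)/(K\cap N)$ is itself a chief factor; and the Dedekind computations identifying both $HK_{0}/K_{0}\cap L_{0}/K_{0}$ and $(H\cap N)K/K\cap L/K$ with $D/E$, $D=H\cap N\cap L$, $E=H\cap N\cap K$ (equivalently, both intersections equal $DK_{0}/K_{0}$ and $DK/K$). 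For the final index step, the cleanest phrasing is via preimages: since $K_{0}\leq DK_{0}$ and $K\leq DK$, the two normalizer indices equal $|G:N_{G}(DK_{0})|$ and $|G:N_{G}(DK)|$, and $N_{G}(DK_{0})\leq N_{G}(DK)$ because $K\unlhd G$ and $DK=(DK_{0})K$; divisibility then follows at once, which is exactly your ``routine index computation.'' One observation worth recording: nowhere do you use that $H$ is a $p$-group, so your argument actually proves the lemma for an arbitrary subgroup $H$, slightly more general than the statement as cited (the $p$-subgroup hypothesis is only needed elsewhere in the paper, where $\pi(H\cap N)\subseteq\{p\}$ is used).
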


\section{Proofs}

\begin{proof}[\bf{Proof of Theorem \ref{first}}]
	By Lemma \ref{p-supersoluble}, we only need to prove the sufficiency. Suppose that $ G $ is not $ p $-supersoluble and let $ G $ be a counterexample of minimal  order.  We divide the proof into the following steps.
	
	\vskip0.1in
	
	\textbf{Step 1.} $ O_{p'}(G)=1 $.

	Set $ \overline{G}=G/O_{p'}(G) $. Clearly, $ \overline{P} $ is a Sylow $ p $-subgroup of $ \overline{G} $, which is isomorphic to $ P $. Thus $ \MM_{d}(\overline{P})=\{\overline{P_{1}}, \overline{P_{2}}, ..., \overline{P_{d}} \} $.  By hypothesis,  $ P_{i}\cap N $ satisfies the $ \Pi  $-property in $ G $ for any $ P_{i} \in \MM_{d}(P) $.  By Lemmas \ref{over} and \ref{p-one},  $ \overline{P_{i}\cap N}=\overline{P_{i}}\cap \overline{N} $ satisfies the $ \Pi $-property in $ \overline{G} $ for any $ \overline{P_{i}}\in \MM_{d}(\overline{P}) $. Clearly, $ \overline{G}/\overline{N}\cong G/NO_{p'}(G) $ is $ p $-supersoluble and $ \overline{G} $ is $ p $-soluble. Thus $ \overline{G} $ satisfies the hypotheses of the theorem. If $ O_{p'}(G)>1 $, then $ G/O_{p'}(G) $ is $ p $-supersoluble by the minimal choice of $ G $. Hence $ G $ is $ p $-supersoluble, a contradiction. Therefore $ O_{p'}(G)=1 $.

	\vskip0.1in
	
	\textbf{Step 2.} $ \Phi(P)_{G}=1 $. In particular, $ \Phi(O_{p}(G))=1 $.

	Assume that $ \Phi(P)_{G}>1 $, and we work to obtain a contradiction. Let $ V $ be a minimal normal subgroup of $ G $ contained in $ \Phi(P)_{G} $.  Since every maximal subgroup of $ P $ contains $ \Phi(P) $ and $ P/V $ has the same smallest generator number as $ P $, we have $ \MM_{d}(P/V)=\{ P_{1}/V, P_{2}/V, ..., P_{d}/V\} $. By Lemma \ref{over} and Dedekind's lemma,  $ P_{i}/V\cap NV/V $ satisfies the $ \Pi $-property in $ G/V $ for any $P_{i}\in \MM_{d}(P)$. It is not  difficult to see that $ G/V $ satisfies the hypotheses of the theorem. The minimal choice of $ G $ yields that $ G/V $ is $ p $-supersoluble. Since $V\leq \Phi(P)$, it follows from \cite[Chapter A, Theorem 9.2(d)]{Doerk-Hawkes} that $ V\leq \Phi(G) $. Note that  the class of all $ p $-supersoluble groups is a saturated formation, and so $ G $ is $ p $-supersoluble, a contradiction. Thus $ \Phi(P)_{G}=1 $. By \cite[Chapter A, Theorem 9.2(e)]{Doerk-Hawkes}, we have $ \Phi(O_{p}(G))=1 $.
	
	\vskip0.1in
	
	\textbf{Step 3.} Every minimal normal subgroup of $ G $ contained in $ O_{p}(G) $ is of order $ p $.

	Since $ G $ is $ p $-soluble and $ O_{p'}(G)=1 $, we have $ O_{p}(G)>1 $. Let $ K $ be a minimal normal subgroup of $ G $ contained in $ O_{p}(G) $. By Step 2, there exists a member $ P_{j}\in \MM_{d}(P) $ such that $ K\nleq P_{j} $. By hypothesis, $ P_{j}\cap N $ satisfies the $ \Pi $-property in $ G $. For the $ G $-chief factor $ K/1 $, we see that $ |G:N_{G}(P_{j}\cap N\cap K)| $ is a $ p $-number, and so $ P_{j}\cap N\cap K\unlhd G $. If $ K\leq N $, then $ P_{j}\cap K=1 $ by the minimality of $ K $. Therefore $ K $ has order $ p $, as desired. Assume that $ K\nleq N $. Since $ O_{p'}(G)=1 $ and $ G/N $ is $ p $-supersoluble, we deduce that $ |K|=p $, as desired.

	\vskip0.1in
	
	\textbf{Step 4.}  The final contradiction.

	Since $ G $ is $ p $-soluble, by Step 1, Step 2 and \cite[Chapter 6, Theorem 3.2]{Gorenstein-1980}, we have $ C_{G}(O_{p}(G)) = O_{p}(G) $. Assume that $ O_{p}(G) \cap \Phi(G) >1 $. Let $ U $ be a minimal normal subgroup of $ G $ contained in $ O_{p}(G) \cap \Phi(G) $. Then $ U $ has order $ p $ by Step 3. In view of  Step 2,  $ U $ is complemented in $ P $. By Lemma \ref{complemented}, $ U $  is complemented in $ G $, which is contrary to $ U \leq \Phi(G) $. Therefore, $ O_{p}(G)\cap \Phi(G)=1 $. By Step 3 and \cite[Lemma 2.13]{skiba2007weakly}, we have that $ O_{p}(G) = U_{1} \times U_{2} \times \cdots \times U_{s} $, where $ U_{i} $ is a minimal normal subgroup of $ G $ of order $ p $ for any $ i=1,...,s $. Note that  $ G/C_{G}(U_{i}) $ is cyclic of order dividing $ p-1 $ for any $ i=1,...,s $. Hence $ G\big /\bigcap\limits_{i=1}^{s} C_{G}(U_{i})=G/C_{G}(O_{p}(G))=G/O_{p}(G) $ is abelian. Since every chief factor of $ G $ below $ O_{p}(G) $ is of order $ p $, we see that $ G $ is $ p $-supersoluble. This final contradiction completes the proof.
\end{proof}

\begin{proof}[\bf{Proof of Theorem \ref{second}}]
	By Lemma \ref{p-supersoluble}, we only need to prove the sufficiency. Suppose that $ G $ is not $ p $-nilpotent and let $ G $ be a counterexample of minimal  order.
	
	\vskip0.1in
	
	\textbf{Step 1.} $ O_{p'}(G)=1 $.

	Set $ \overline{G}=G/O_{p'}(G) $. 	Arguing as Step 1 of the proof of Theorem \ref{first}, we know that $ \overline{G} $ satisfies the hypotheses of the theorem. If $ O_{p'}(G)>1 $, then $ G/O_{p'}(G) $ is $ p $-nilpotent by the minimal choice of $ G $. Hence $ G $ is $ p $-nilpotent, a contradiction. Therefore $ O_{p'}(G)=1 $.

	\vskip0.1in
	
	\textbf{Step 2.} $ \Phi(P)_{G}=1 $. In particular, $ \Phi(O_{p}(G))=1 $.

	Assume that $ \Phi(P)_{G}>1 $. Let $ V $ be a minimal normal subgroup of $ G $ contained in $ \Phi(P)_{G} $. With a similar argument as Step 2 of the proof of Theorem \ref{first}, we can obtain that $ G/V $ is $ p $-nilpotent. Since $ V\leq \Phi(P) $, we have $ V\leq \Phi(G) $. Note that  the class of all $ p $-nilpotent groups is a saturated formation, and hence $ G $ is $ p $-nilpotent, a contradiction. Thus $ \Phi(P)_{G}=1 $, and so $ \Phi(O_{p}(G))=1 $.

	\vskip0.1in

	\textbf{Step 3.} Let $ K $ be a minimal normal subgroup of $ G $ such that $ K\nleq N $. Then $ |K|=p $.

	Note that $ KN/N $ is a minimal normal subgroup of $ G/N $. Since $ O_{p'}(G)=1 $ and $ G/N $ is $ p $-supersoluble, we deduce that $ |K|=p $, as wanted.
	
	\vskip0.1in
	
	\textbf{Step 4.} Every minimal normal subgroup of $ G $ is contained in $ O_{p}(G) $.

	Assume that $ K $ is a minimal normal subgroup of $ G $, which is not a $ p $-group. We work to obtain a contradiction. Since $ O_{p'}(G)=1 $, we have $ p\big ||K| $. If $ K\nleq N $, then $ |K|=p $ by Step 3, a contradiction.   Therefore $ K\leq N $. By hypothesis, $ P_{i} $ satisfies the $ \Pi $-property in $ G $ for any $ P_{i}\in \MM_{d}(P) $. Considering the $ G $-chief factor $ K/1 $, we have that $  |G:N_{G}(P_{i}\cap K)| $ is a $ p $-number, and thus $ P_{i}\cap K\unlhd G $. Then either $ P_{i}\cap K=1 $ or $ P_{i}\cap K=K $. If $ P_{i}\cap K=K $, then $ K $ is a $ p $-group, a contradiction. Therefore, $ P_{i}\cap K=1 $. Since $ O_{p'}(G)=1 $, it follows that $ |P\cap K|=p $. If $ K=G $, then $ P $ is cyclic of order $ p $. Since $ N_{G}(P) $ is $ p $-nilpotent, we deduce that $ P\leq Z(N_{G}(P)) $.  By Burnside's Theorem (see \cite[Chapter 7, Theorem 4.3]{Gorenstein-1980}),  it follows that $ G $ is $ p $-nilpotent, a contradiction. Therefore $ K<G $. Set $ L=PK $. Note that $ |PK|=\frac{|P||K|}{|P\cap K|}=|P_{i}||K| $ and $ P_{i}\cap K=1 $. Thus $ L=P_{i}K $. Observe that  $ L/K $ is a $ p $-group (also a $ p $-supersoluble group), $ P_{i}\cap K=1 $ satisfies the $ \Pi $-property in $ G $ and $ N_{L}(P) $ is $ p $-nilpotent. Hence $ L $ inherits the hypotheses of the theorem. If $ L<G $, then $ L $ is $ p $-nilpotent by the minimal choice of $ G $. Thus $ K $ is $ p $-nilpotent. Since $ O_{p'}(G)=1 $, we have $ K\leq O_{p}(G) $, a contradiction. Hence $ P_{i}K=L=G $. Write $ W=N_{G}(P\cap K) $. Clearly, $ P\leq W $, $ W=P_{i}K\cap W=P_{i}(K\cap W) $. Note that $ W/(K\cap W) $ is a $ p $-group, $ P_{i}\cap (K\cap W)=P_{i}\cap K=1 $ satisfies the $ \Pi $-property in $ G $ and $ N_{W}(P) $ is $ p $-nilpotent. Therefore $ W $ satisfies the hypotheses of the theorem. If $ W < G $, then $ W=N_{G}(P\cap K) $  is $ p $-nilpotent by the minimal choice of $ G $, and so $ N_{K}(P\cap K) $ is $ p $-nilpotent. Since $ |P\cap K|=p $, we have $ P\cap K\leq Z(N_{K}(P\cap K)) $. It follows that $ K $ is $ p $-nilpotent. By Step 1, we know $ K\leq O_{p}(G) $, a contradiction. If $W=G$, then $K=P\cap K$ by Step 1, a contradiction.  Hence Step 4 holds.

	\vskip0.1in
	
	\textbf{Step 5.} Every minimal normal subgroup of $ G $ is of order $ p $.

	Let $ K $ be a minimal normal subgroup of $ G $. By Step 4, we have $ K \leq O_{p}(G) $. If $ K\nleq N $, then $ |K|=p $ by Step 3, as desired. Hence we may assume that $ K\leq N $. By hypothesis, $P_{i}$ satisfies the $ \Pi $-property in $ G $ for any $ P_{i}\in \MM_{d}(P) $. For the $ G $-chief factor $ K/1 $, we know that $ |G:N_{G}(P_{i}\cap K)| $ is a $ p $-number, and so $ P_{i}\cap K\unlhd G $. The minimality of $K$ implies that either $ P_{i}\cap K=1 $ or $ P_{i}\cap K=K $, where $ i=1, ..., d $. If $ P_{i}\cap K=K $ for every  $ P_{i} \in \MM_{d}(P) $, then $ K\leq\mathop{\cap}\limits_{i=1}^{d} P_{i}=\Phi(P) $, which contradicts Step 2. So there exists a member $ P_{j}\in \MM_{d}(P) $ such that $ P_{j}\cap K=1 $. Since $ O_{p'}(G)=1 $, we have $ |P\cap K|=p $, and thus $ |K|=p $ by Step 4, as wanted.

	\vskip0.1in
	
	\textbf{Step 6.} The final contradiction.

	Let $ R $ be a subgroup of $ G $ of minimal order such that $ O_{p}(G)R=G $.  Assume that $ O_{p}(G) \cap R >1 $. By Step 2, we see that $ O_{p}(G)\cap R \unlhd G $. Let $ T $ be a minimal normal subgroup of $ G $ contained in $ O_{p}(G)\cap R $. Then $T$ has order $p$ by Step 5. In view of Step 2, we see that $ T $ is complemented in $ P $. By Lemma \ref{complemented}, $ T $ has a complement $ A $ in $ G $. Hence $ R=R\cap TA=T(R\cap A) $, and so $ G=O_{p}(G)R=O_{p}(G)(R\cap A) $. The minimal choice of $ R $ yields that $ R\cap A=R $, i.e., $ R\leq A $. As a consequence, $ T\leq O_{p}(G)\cap R\leq A $. This is a contradiction. Therefore $ O_{p}(G)\cap R=1 $.
	
	Assume that $ O_{p}(G) \cap \Phi(G) > 1 $. Let $ U $ be a minimal normal subgroup of $ G $ contained in $ O_{p}(G) \cap \Phi(G) $. Then $ U $ has order $ p $ by Step 5. In view of Step 2,  $ U $ is complemented in $ P $. By Lemma \ref{complemented}, $ U $  is complemented in $ G $, which contradicts the fact that $ U \leq \Phi(G) $. Therefore, $ O_{p}(G)\cap \Phi(G)=1 $. By Step 5 and \cite[Lemma 2.13]{skiba2007weakly},  $ O_{p}(G) = U_{1} \times U_{2} \times \cdots \times U_{s} $, where $ U_{i}\unlhd G $ and $ |U_{i}| = p $ for any $ i=1,...,s $. Hence $ O_{p}(G)\leq Z(P) $. If $ P\cap R=1 $, then $  P =O_{p}(G)(P\cap R)=O_{p}(G) $, and thus $ G=N_{G}(P) $ is $ p $-nilpotent, a contradiction. Therefore, $ P\cap R>1 $. Let $ B $ be a minimal normal subgroup of $ G $ contained in $ (P \cap R)^{G} $. By Step 4, we have $ B\leq O_{p}(G) $. Since $O_{p}(G)\leq Z(P)$, we have  $ (P \cap R)^{G}= (P \cap R)^{O_{p}(G)R} =(P\cap R)^{R}\leq R $. Thus $ B\leq R $, which contradicts the fact that $ O_{p}(G)\cap R=1 $. This  final contradiction completes the proof.
\end{proof}

\begin{proof}[\bf{Proof of Theorem \ref{third}}]
	By Lemma \ref{p-supersoluble}, we only need to prove the sufficiency. Suppose that $ G $ is not $ p $-nilpotent and let $ G $ be a counterexample of minimal  order.
	
	\vskip0.1in
	
	\textbf{Step 1.} $ O_{p'}(G)=1 $.
	
    With a similar argument as Step 1 of the proof of Theorem \ref{second}, we can get that $ O_{p'}(G)=1 $.

	\vskip0.1in
	
	\textbf{Step 2.} $ \Phi(P)_{G}=1 $. In particular, $ \Phi(O_{p}(G))=1 $.

	Arguing as Step 2 of the proof of Theorem \ref{second}, we see that  $ \Phi(P)_{G}=1 $ and $ \Phi(O_{p}(G))=1 $.
	
	

	
	\vskip0.1in
	
	\textbf{Step 3.} Every minimal normal subgroup of $ G $ has order $ p $.

	
	Let $ K $ be a minimal normal subgroup of $ G $. Since $ O_{p'}(G)=1 $, we have $ p\big ||K| $. If $ K\nleq N $, then $ KN/N $ is a minimal normal subgroup of $ G/N $. Since $ O_{p'}(G)=1 $ and $ G/N $ is $ p $-supersoluble, we deduce that $ |K|=p $, as desired. Hence we may assume that $ K\leq N $. By hypothesis,  $ P_{i}\cap N $ satisfies the $ \Pi $-property in $ G $ for any $ P_{i}\in \MM_{d}(P) $. Considering the $ G $-chief factor $ K/1 $, we see that  $ |G:N_{G}(P_{i}\cap K)| $ is a $ p $-number, and thus $ P_{i}\cap K\unlhd G $. The minimality of $ K $ implies that either $ P_{i}\cap K=1 $ or $ P_{i}\cap K=K $, where $ i=1, ..., d $. If $ P_{i}\cap K=K $ for any $ P_{i} \in \MM_{d}(P) $, then $ K\leq\mathop{\cap}\limits_{i=1}^{d} P_{i}=\Phi(P) $, which contradicts Step 2. So there exists a member $ P_{j}\in \MM_{d}(P) $ such that $ P_{j}\cap K=1 $. In view of $ O_{p'}(G)=1 $, we have $ |P\cap K|=p $. Since $ (|G|, p-1)=1 $, it follows from  \cite[Chapter 1, Lemma 3.39]{Guo2015} that $ K $ is $ p $-nilpotent. Again, $ O_{p'}(G)=1 $, we conclude that $ |K|=p $, as desired.

	\vskip0.1in
	
	\textbf{\noindent Step 4.} The final contradiction.

	Let $ T_{1}, T_{2}, ..., T_{s} $ be all minimal normal subgroups of $ G $. By Step 3, $ T_{i} $ is of order $ p $ for any $ i=1, ..., s $. By Step 2, $ T_{i} $ is complemented in $ P $, and so $ T_{i} $ has a complement $ X_{i} $ in $ G $ for any $ i=1, ..., s $ by Lemma \ref{complemented}. Note that $ G/C_{G}(T_{i}) \lesssim \mathrm{Aut}(T_{i}) $. Since $ (|G|, p-1)=1 $, it follows that $ G=C_{G}(T_{i}) $, i.e., $ T_{i}\leq  Z(G) $ for any $ i=1, ..., s $. Thus $ X_{i}\unlhd G $ for any $ i = 1, 2, ..., s $. Let $ Y $ be a subgroup of $ G $ of minimal order such that $ (T_{1}T_{2}\cdots T_{s})Y=G $.   Assume that  $ O_{p}(G)\cap Y >1 $. By Step 2, we see that $ O_{p}(G)\cap Y  \unlhd G $.  Let $ U $ be a minimal normal subgroup  of $ G $ contained in $ O_{p}(G)\cap Y $. Then $ U = T_{j} $ for some $ j\in \{ 1, 2, ..., s\} $. Hence $ G=T_{j}X_{j}=UX_{j} $ and $ Y=U(Y\cap X_{j}) $. As a consequence, $ G=(T_{1}T_{2}\cdots T_{s})Y=(T_{1}T_{2}\cdots T_{s})(Y\cap X_{j}) $.  The minimal choice of $ Y $ implies that $ Y\cap X_{j}=Y $, and thus $ T_{j}=U\leq  Y\leq X_{j} $, a contradiction.  Therefore $ O_{p}(G)\cap Y =1 $. Thus $ O_{p}(G)=T_{1}T_{2}\cdots T_{s}\leq Z(G) $. It follows that  $ 1<Y\unlhd G $. By Step 3, we have $ O_{p}(G)\cap Y>1 $, a contradiction. The proof is now complete.
\end{proof}

\section{Final remarks and applications}\label{S4}

In this section, we will show that the concept of the $ \Pi $-property can be viewed as a generalization of many known embedding properties.

Recall that two subgroups $ H $ and $ K $ of a group $ G $ are said to be \emph{permutable}  if $ HK = K H $.  From Kegel \cite{kegel1962sylow},  a subgroup $ H $ of a group $ G $ is said to be \emph{$ S $-permutable}  (or \emph{$\pi$-quasinormal}, \emph{$ S $-quasinormal}) in $ G $ if $ H $ permutes with all Sylow subgroups of $ G $.  According to \cite{guo2007x}, a subgroup $ H $ of a group $ G $  is said to be \emph{$ X $-permutable}  with a subgroup $ T $ of $ G $  if there is an element $ x \in X $ such that $ HT^{x} = T^{x}H $, where  $ X $ is a   non-empty subset of $ G $.   Following  \cite{Alsheik}, the $ \UU $-hypercenter $ Z_{\UU}(G) $ of a group $ G $ is the product of all normal subgroups $ H $ of $ G $, such that all $ G $-chief factors below $ H $ have prime order, where  $ \UU $ denotes the class of all
supersoluble groups. A subgroup $ H $ of a group $ G $ is called a \emph{CAP-subgroup} of $ G $ if $ H $ either covers or avoids every chief factor $ L/K $ of $ G $, that is, $ HL =HK $ or $ H \cap L =H \cap K $ (see \cite[Chapter A, Definition 10.8]{Doerk-Hawkes}).

\begin{proposition}\label{p1}
	Let $ H $ be a subgroup of a group $ G $. Then $ H $ satisfies $ \Pi $-property in $ G $, if one of the following holds:
	
	\vskip0.08in
	\noindent{\rm(1)} $ H $ is normal in $ G $;
	
	\noindent{\rm(2)} $ H $ is permutable in $ G $;
	
	\noindent{\rm(3)} $ H $ is $ S $-permutable in $ G $;
	
	\noindent{\rm(4)} $ H $ is $ X $-permutable with all Sylow subgroups of $ G $, where $ X $ is a soluble normal subgroup of $ G $;
	
	\noindent{\rm(5)} $ H $ is a CAP-subgroup of $ G $;
	
	\noindent{\rm(6)} $ H/H_{G}  \leq Z_{\UU}(G /H_{G}) $.
\end{proposition}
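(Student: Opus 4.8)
The plan is to verify the $\Pi$-property one chief factor at a time. Fix a chief factor $L/K$ of $G$ and write $\overline{G}=G/K$, $\overline{H}=HK/K$, $\overline{L}=L/K$ and $T=\overline{H}\cap\overline{L}$; since $K\unlhd G$ we have $N_{\overline{G}}(T)=N_{G}(HK\cap L)/K$, so the task is to show that $|\overline{G}:N_{\overline{G}}(T)|$ is a $\pi(T)$-number. Note $T=(H\cap L)K/K$ by Dedekind's modular law. The cheapest way to conclude is to prove $T\unlhd\overline{G}$ (then the index is $1$); failing that, it suffices to exhibit, for each prime $q\nmid|T|$, a Sylow $q$-subgroup of $\overline{G}$ normalizing $T$. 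Cases (1) and (5) fall to the first strategy: if $H\unlhd G$ then $\overline{H}\unlhd\overline{G}$, so $T=\overline{H}\cap\overline{L}\unlhd\overline{G}$; and if $H$ is a CAP-subgroup then either $HL=HK$, so $L\le HK$, $\overline{L}\le\overline{H}$ and $T=\overline{L}\unlhd\overline{G}$, or $H\cap L=H\cap K$, so $(H\cap L)K=K$ and $T=1$.

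Since a permutable subgroup is $S$-permutable, (2) reduces to (3). For (3) I would use three standard facts: $S$-permutability is inherited by quotients, so $\overline{H}$ is $S$-permutable in $\overline{G}$; the intersection of an $S$-permutable subgroup with a normal subgroup is $S$-permutable, so $T=\overline{H}\cap\overline{L}$ is $S$-permutable in $\overline{G}$; and an $S$-permutable subgroup is subnormal (Kegel \cite{kegel1962sylow}). Now fix a prime $q\nmid|T|$ and $Q\in\mathrm{Syl}_{q}(\overline{G})$. Then $TQ=QT$ is a subgroup of $\overline{G}$ in which $T$ is a Hall $q'$-subgroup, and $T$ is subnormal in $TQ$; as a subnormal Hall subgroup is normal, $T\unlhd TQ$, i.e. $Q\le N_{\overline{G}}(T)$. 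Letting $q$ run over all primes not dividing $|T|$ shows that $|\overline{G}:N_{\overline{G}}(T)|$ is a $\pi(T)$-number, so (2) and (3) hold.

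For (4) and (6) I would first use the quotient $\overline{G}$ to reduce to the case where $L$ is a minimal normal subgroup of $G$. For (6), the hypothesis is transported using $Z_{\UU}(G/N)\supseteq Z_{\UU}(G)N/N$: when $H_{G}\le K$ this gives $\overline{H}\le Z_{\UU}(\overline{G})$ outright; when $H_{G}\not\le K$, the nontrivial normal subgroup $H_{G}K/K$ of $\overline{G}$ either contains the minimal normal subgroup $\overline{L}$ (so $T=\overline{L}\unlhd\overline{G}$) or meets it trivially, the remaining situation being settled by running the argument on $(H\cap L)H_{G}/H_{G}\le Z_{\UU}(G/H_{G})$ exactly as in the proof of Lemma \ref{p-supersoluble}. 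Once $\overline{H}\le Z_{\UU}(\overline{G})$ with $\overline{L}$ minimal normal in $\overline{G}$, either $\overline{L}\le Z_{\UU}(\overline{G})$, whence $|\overline{L}|$ is prime and $T\in\{1,\overline{L}\}$ is normal in $\overline{G}$, or $\overline{L}\cap Z_{\UU}(\overline{G})=1$, whence $T=1$. For (4), $X$-permutability with all Sylow subgroups (with $X$ a soluble normal subgroup) is likewise inherited by $\overline{G}$, with soluble normal subgroup $XK/K$; here I would invoke the structure theory of \cite{guo2007x}, by which such subgroups cover or avoid each chief factor in the appropriate sense, and conclude as in (5). I expect (4) to be the main obstacle: $X$-permutability is the least transparent of the six hypotheses, and the reduction to a minimal normal subgroup does not by itself pin down $T$, so this step genuinely relies on \cite{guo2007x} rather than on a self-contained argument.
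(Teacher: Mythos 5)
The paper does not prove this proposition at all: it simply cites \cite[Propositions 2.2--2.3]{li-2011}. You are therefore doing strictly more work than the authors, and most of it holds up. Your treatment of (1) and (5) is correct (normality, respectively the cover/avoid dichotomy combined with Dedekind's law using $K\leq L$, forces $HK/K\cap L/K$ to be $\overline{L}$, trivial, or normal). Your argument for (2) and (3) is the standard and complete one: quotient-closure and intersection-with-normal-subgroups for $S$-permutability, Kegel's theorem that $S$-permutable subgroups are subnormal, and the fact that a subnormal Hall $q'$-subgroup of $TQ$ lies in $O_{q'}(TQ)$ and hence is normal there. For (6) your skeleton is right (transport the hypothesis via $Z_{\UU}(G)N/N\leq Z_{\UU}(G/N)$, split on whether $H_G\leq K$, and use that a chief factor either lies below the $\UU$-hypercenter, hence has prime order, or meets it trivially), though the subcase $H_G\not\leq K$ with $H_GK\cap L=K$ is only gestured at; it does close, by passing to $G/H_G$, noting that $LH_G/KH_G$ is a chief factor $G$-isomorphic to $L/K$, and pulling the conclusion back, but as written this step is an appeal to ``the proof of Lemma \ref{p-supersoluble}'', which in this paper is itself only a citation, so nothing is actually on the page.

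The genuine gap is case (4). You do not give an argument: you state that you would ``invoke the structure theory of \cite{guo2007x}, by which such subgroups cover or avoid each chief factor in the appropriate sense,'' but $X$-permutable subgroups are \emph{not} CAP-subgroups in general, and no result of that form is identified. The actual proof (this is where the solubility of the normal subgroup $X$ is used) requires a case split on the chief factor $L/K$: if $L\leq XK$ then $L/K$ is abelian, hence a $q$-group for a single prime $q$, and one shows a Sylow $r$-subgroup of $G/K$ normalizes $HK/K\cap L/K$ for every $r\neq q$ not in $\pi(T)$ by using that $H$ permutes with a conjugate (by an element of $X$) of each Sylow subgroup; if $L\cap XK\leq K$ then $XK/K$ centralizes $L/K$, so permuting with $Q^x$ for $x\in X$ is as good as permuting with $Q$ as far as the action on $L/K$ is concerned, reducing to the $S$-permutable computation of case (3). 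Neither half of this is present in your proposal, and you concede as much. So (1), (2), (3), (5) are proved, (6) is a repairable sketch, and (4) is missing; the paper's own one-line citation of \cite{li-2011} covers all six.
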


\begin{proof}[\bf{Proof}]
	Statements (1)-(6) were proved in \cite[Propositions 2.2-2.3]{li-2011}.
\end{proof}

A subgroup $ H $ of a group $ G $ is said to be \emph{$ S $-semipermutable}  \cite{Chen-1987} in $ G $  if $ HG_{p} = G_{p}H $ for any Sylow $ p $-subgroup $ G_{p} $ of $ G $ with $ (p,|H|) = 1 $. A subgroup $ H $ of a group $ G $ is  said to be \emph{$ SS $-quasinormal}  \cite{Li-2008} in  $ G $ if there is a subgroup  $ B $ of $ G $  such that $ G=HB $ and  $ H $ permutes with every Sylow subgroup of $ B $.

\begin{proposition}\label{p2}
	Let $ H $ be a $ p $-subgroup of a group $ G $  for some prime $ p \in \pi( G ) $. Then $ H $ satisfies the $ \Pi $-property in $ G $, if one of the following holds:
	
	\vskip0.08in
	\noindent{\rm (1)} $ H $ is $ S $-semipermutable in $ G $;
	
	\noindent{\rm (2)} $ H $ is $ SS $-quasinormal in  $ G $.
	
\end{proposition}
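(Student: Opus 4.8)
Proposition \ref{p2} asks us to verify the defining condition of the $\Pi$-property for each of the two embedding hypotheses, and the natural plan is to reduce at once to the case of a minimal normal subgroup. Given a chief factor $L/K$ of $G$, I would pass to $\overline{G}=G/K$. By the stability of $S$-semipermutability under quotients (\cite{Chen-1987}) and of $SS$-quasinormality under quotients (\cite{Li-2008}) --- the analogues of Lemma \ref{over} --- the image $\overline{H}=HK/K$ is again an $S$-semipermutable (resp.\ $SS$-quasinormal) $p$-subgroup of $\overline{G}$, while $\overline{L}=L/K$ is a minimal normal subgroup of $\overline{G}$ and $(HK\cap L)/K=\overline{H}\cap\overline{L}=:D$. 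Since $H$ is a $p$-subgroup, $D$ is a $p$-group, so $\pi(D)\subseteq\{p\}$ and it suffices to show that $|\overline{G}:N_{\overline{G}}(D)|$ is a $p$-number. This is immediate when $D=1$ or $D=\overline{L}$ (the latter because $\overline{L}\unlhd\overline{G}$), so I may assume $1\neq D<\overline{L}$; then $p$ divides $|\overline{L}|$, and since $\overline{L}$ is characteristically simple it is either an elementary abelian $p$-group or a direct product of non-abelian simple groups. I would treat these two cases separately.

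In the case where $\overline{L}$ is a $p$-group, we have $D\leq\overline{L}\leq O_{p}(\overline{G})$. Fix a prime $q\neq p$ and choose a Sylow $q$-subgroup $Q$ of $\overline{G}$ with which $\overline{H}$ permutes: any such $Q$ works when $\overline{H}$ is $S$-semipermutable, and when $\overline{H}$ is $SS$-quasinormal with defining subgroup $B$ one takes $Q$ inside $B$ (a full Sylow $q$-subgroup of $\overline{G}$, since $|\overline{G}:B|$ divides $|\overline{H}|$ and is a $p$-number). Then $\overline{H}Q$ and $\overline{L}Q$ are subgroups, and the modular law gives $\overline{H}Q\cap\overline{L}Q=DQ$; in particular $DQ=QD$. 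Since $O_{p}(\overline{G})\cap Q=1$, the modular law again yields $D=DQ\cap O_{p}(\overline{G})\unlhd DQ$, hence $Q\leq N_{\overline{G}}(D)$. Letting $q$ range over all primes different from $p$, we find that $N_{\overline{G}}(D)$ contains a full Sylow $q$-subgroup of $\overline{G}$ for every such $q$, so $|\overline{G}:N_{\overline{G}}(D)|$ is a $p$-number, as required.

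In the remaining case $\overline{L}$ is non-abelian, a direct product of non-abelian simple groups, and $D=\overline{H}\cap\overline{L}$ would be a nontrivial proper $p$-subgroup of $\overline{L}$. But an $S$-semipermutable (resp.\ $SS$-quasinormal) $p$-subgroup of a group cannot intersect a non-abelian minimal normal subgroup in a proper, nontrivial way: one must have $\overline{H}\cap\overline{L}=1$ or $\overline{L}\leq\overline{H}$, and the latter is impossible since $\overline{L}$ is not a $p$-group. This contradiction rules out the non-abelian case, and together with the previous paragraph it completes the verification.

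The step I expect to be the main obstacle is precisely this last one --- excluding a nontrivial proper $S$-semipermutable (or $SS$-quasinormal) $p$-subgroup inside a non-abelian chief factor. In contrast to the $p$-group case, it cannot be settled by Sylow theory and the modular law alone; it rests on the structure of the $S$-semipermutable (resp.\ $SS$-quasinormal) subgroups of the non-abelian composition factors of $\overline{L}$, hence ultimately on the classification of finite simple groups. Rather than reproving it, I would cite the pertinent structural lemmas from \cite{Chen-1987} and \cite{Li-2008} (and their later refinements in the literature on the $\Pi$-property). All the remaining ingredients --- quotient- and intersection-stability of the two embedding properties and the normalizer computation inside $O_{p}(\overline{G})$ --- are routine.
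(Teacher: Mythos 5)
Your argument for the case where $\overline{L}$ is a $p$-group is correct and complete: permutability with a single Sylow $q$-subgroup $Q$ for each $q\neq p$, together with the Dedekind identity and the fact that $\overline{L}$ is the unique Sylow $p$-subgroup of $\overline{L}Q$, does give $D=DQ\cap O_{p}(\overline{G})\unlhd DQ$ and hence a $p$-power normalizer index. The quotient reductions are also standard and correctly invoked. However, there is a genuine gap at exactly the point you flag yourself: the non-abelian chief factor case. You assert that an $S$-semipermutable (resp.\ $SS$-quasinormal) $p$-subgroup must meet a non-abelian minimal normal subgroup trivially, offer no argument, and propose to cite ``pertinent structural lemmas'' from \cite{Chen-1987} and \cite{Li-2008}. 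Neither reference contains such a lemma --- \cite{Chen-1987} is the short note introducing $S$-semipermutability, and \cite{Li-2008} concerns $SS$-quasinormality without any such intersection theorem --- and the claim is in fact the entire content of the hard half of the proposition, so deferring it to unnamed citations (and to CFSG, which the known proofs of this statement do not use) leaves the proof incomplete. A proof of the non-abelian case has to be supplied or correctly attributed; as it stands it is an unsupported assertion.

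For comparison, the paper does not prove the statement from scratch at all: part (1) is quoted directly from \cite[Proposition 2.4]{li-2011}, and part (2) is reduced to part (1) by \cite[Lemma 2.5]{Li-2008}, which shows that an $SS$-quasinormal $p$-subgroup is $S$-semipermutable (since $|G:B|$ is a $p$-number, the Sylow $q$-subgroups of $B$ are Sylow in $G$ for $q\neq p$, and conjugating by elements of $G=HB$ spreads permutability to all of them). That reduction would also have simplified your write-up, sparing you the parallel treatment of the $SS$-quasinormal case; but the essential missing ingredient remains a correct treatment (or a correct citation, namely to \cite{li-2011}) of the non-abelian chief factors.
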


\begin{proof}[\bf{Proof}]
	(1) By \cite[Proposition 2.4]{li-2011}, the conclusion follows.
	
	(2) Applying  \cite[Lemma 2.5]{Li-2008}, we see that $ H $ is $ S $-semipermutable in $ G $. Therefore, $ H $ satisfies the $ \Pi $-property in $ G $ by statement (1).
\end{proof}

A subgroup $ H $ of a group $ G $ is said to be \emph{$ \Pi $-normal}  \cite{li-2011} in $ G $ if $ G $ has a subnormal subgroup $ T $ such that $ G = HT $ and $ H \cap T\leq  I \leq H $, where $ I $ satisfies the $ \Pi $-property in $ G $.
A   subgroup $ H $ of a group $ G $ is called \emph{$ c $-normal}   \cite{wang1996c} in $ G $ if $ G $ has a normal subgroup $ T $ such that  $ HT = G $ and $ T \cap H \leq H_{G} $,  where $ H_{G} $ is the core of $ H $ in $ G $.  A subgroup $ H $ of a group $ G $ is called \emph{$ \UU_{c} $-normal}  \cite{Alsheik} in $ G $  if there is a subnormal subgroup $ T $ of $ G $ such that $ G=HT $ and  $ (H\cap T)H_{G}/H_{G}  \leq Z_{\UU}(G /H_{G}) $.

A subgroup $ H $ of a group $ G $ is said to be \emph{weakly $ S $-permutable} \cite{skiba2007weakly} in $ G $ if there is a subnormal subgroup $ T $ of $ G $ such that $ G = HT $ and $ H \cap T \leq H_{sG}  $, where $ H_{sG} $ is the subgroup of $ H $ generated by all those subgroup of $ H $ which are $ S $-permutable in $ G $. A subgroup $ H $ of a group $ G $ is said to be \emph{weakly $ SS $-permutable}  \cite{He-2010} in $ G $ if there exist a subnormal subgroup $ T $ of $ G $ and an $ SS $-quasinormal subgroup $ H_{ss} $ of $ G $ contained in $ H $ such that $ G= HT $ and $ H\cap T\leq H_{ss} $. A subgroup $ H $ of a group $ G $ is called a \emph{$ c^{\sharp} $-normal}  subgroup \cite{wang2013c} of $ G $ if $ G $ has a normal subgroup $ T $ such that $ G = HT $ and $ H \cap T $ is a CAP-subgroup of $ G $. A subgroup $ H $  of a group $ G $ is \emph{weakly $ S $-semipermutable}  \cite{li2012weakly} in $ G $ if there are a subnormal subgroup $ T $ of $ G $ and an $ S $-semipermutable subgroup $ H_{ssG} $ of $ G $ contained in $ H $ such
that $ G = HT $ and $ H \cap T \leq H_{ssG} $.

\begin{proposition}\label{p3}
	Let $ H $ be a $ p $-subgroup of a group $ G $  for some prime $ p \in \pi( G ) $. Then $ H\cap O^{p}(G) $ satisfies the $ \Pi $-property in $ G $, if one of the following holds:
	
	\vskip0.08in
	\noindent{\rm(1)}  $ H $ is $ \Pi $-normal in $ G $;
	
	\noindent{\rm(2)} $ H $ is $ c $-normal in  $ G $;
	
	\noindent{\rm(3)} $ H $ is $ \UU_{c} $-normal in $ G $;
	
	\noindent{\rm(4)} $ H $ is weakly $ S $-permutable in $ G $;
	
	\noindent{\rm(5)} $ H $ is a $ c^{\sharp} $-normal subgroup of $ G $;
	
	\noindent{\rm(6)} $ H $ is weakly $ SS $-permutable in $ G $;
	
	\noindent{\rm(7)} $ H $ is weakly $ S $-semipermutable in $ G $.
\end{proposition}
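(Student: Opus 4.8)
The plan is to reduce all of (1)--(7) to the single case (1): in each of the other cases I would first show that $H$ is in fact $\Pi$-normal in $G$, by exhibiting the required subnormal subgroup $T$ with $G=HT$ together with a subgroup $I$ of $H$ satisfying $H\cap T\leq I\leq H$ such that $I$ satisfies the $\Pi$-property in $G$. Concretely, in (2) one takes $T$ to be the given normal subgroup and $I=H_{G}$, which satisfies the $\Pi$-property in $G$ by Proposition \ref{p1}(1); in (5) one takes $I=H\cap T$, a CAP-subgroup of $G$, so Proposition \ref{p1}(5) applies; in (6) one takes $I=H_{ss}$, which is an $SS$-quasinormal $p$-subgroup (a $p$-subgroup since $I\leq H$), so Proposition \ref{p2}(2) applies; in (7) one takes $I=H_{ssG}$, an $S$-semipermutable $p$-subgroup, so Proposition \ref{p2}(1) applies; in (4) one takes $I=H_{sG}$ and uses the known fact (from \cite{skiba2007weakly}) that $H_{sG}$ is $S$-permutable in $G$, so that Proposition \ref{p1}(3) applies. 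For (3) one takes $I=(H\cap T)H_{G}$, which is a subgroup of $H$; here, since $H_{G}\leq I_{G}$ and the $\UU$-hypercentre passes into the $\UU$-hypercentre of a quotient, the hypothesis $(H\cap T)H_{G}/H_{G}\leq Z_{\UU}(G/H_{G})$ upgrades to $I/I_{G}\leq Z_{\UU}(G/I_{G})$, so that $I$ satisfies the $\Pi$-property in $G$ by Proposition \ref{p1}(6). Hence it suffices to treat case (1).

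So suppose $H$ is $\Pi$-normal in $G$, say $T$ is a subnormal subgroup of $G$ with $G=HT$ and $H\cap T\leq I\leq H$, where $I$ satisfies the $\Pi$-property in $G$. Since $H$ is a $p$-group, $|G:T|=|H:H\cap T|$ is a power of $p$; and a subnormal subgroup of $p$-power index contains $O^{p}(G)$, so $O^{p}(G)\leq T$. Consequently $H\cap O^{p}(G)\leq H\cap T\leq I$, while also $H\cap O^{p}(G)\leq O^{p}(G)$ and $I\leq H$, whence $H\cap O^{p}(G)=I\cap O^{p}(G)$. Now $I$ is a $p$-subgroup of $G$ satisfying the $\Pi$-property in $G$ and $O^{p}(G)\unlhd G$, so by Lemma \ref{cap} the subgroup $I\cap O^{p}(G)=H\cap O^{p}(G)$ satisfies the $\Pi$-property in $G$, which is the desired conclusion.

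The routine facts I would need are: (i) that a subnormal subgroup $T$ of $G$ with $|G:T|$ a power of $p$ contains $O^{p}(G)$, which follows by induction on the subnormal defect of $T$ using the identity $O^{p}(O^{p}(G))=O^{p}(G)$; (ii) in case (3), the standard fact that $Z_{\UU}(G)N/N\leq Z_{\UU}(G/N)$ for every normal subgroup $N$ of $G$; and (iii) in case (4), the fact from \cite{skiba2007weakly} that $H_{sG}$ is $S$-permutable in $G$. I expect the only mildly delicate point to be case (3), where $I$ is not contained in a single intersection $H\cap T$ but is assembled from $H\cap T$ and the core $H_{G}$, so that one must keep track of cores when transferring the $\UU$-hypercentral condition down to the quotient $G/I_{G}$; everything else is bookkeeping that is uniform across the seven cases.
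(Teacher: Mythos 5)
Your proof is correct and follows essentially the same route as the paper: reduce cases (2)--(7) to case (1) via Propositions \ref{p1} and \ref{p2}, and settle case (1) by observing that $O^{p}(G)\leq T$ (the paper cites $O^{p}(T)=O^{p}(G)$ from the literature where you prove the containment directly), so that $H\cap O^{p}(G)=I\cap O^{p}(G)$ and Lemma \ref{cap} applies. Your treatment is merely more explicit about the choices of $T$ and $I$ in each case, particularly in case (3).
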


\begin{proof}[\bf{Proof}]
	(1) If $ H $ is $ \Pi $-normal in $ G $, then $ G $ has a subnormal subgroup $ T $ such that $ G =HT $ and $ H \cap T\leq  I \leq H $, where $ I $ satisfies the $ \Pi $-property in $ G $. Since $ H $ is a $ p $-subgroup of $ G $, it follows from \cite[Lemma 1.1.11]{MR2762634} that  $ O^{p}(T) = O^{p}(G) $. Then $ H\cap O^{p}(G)=I\cap O^{p}(G) $ satisfies the $  \Pi $-property in $ G $ by Lemma \ref{cap}.
	
	Statements (2)-(7) directly follow from statement (1). By Proposition \ref{p1},  a $ c $-normal  (respectively, $ \UU_{c} $-normal,  weakly $ S $-permutable, $ c^{\sharp} $-normal)  subgroup of $ G $ is a $ \Pi $-normal subgroup of  $ G $.
	If $ H $ is a  weakly $ SS $-permutable (respectively, weakly $ S $-semipermutable) $ p $-subgroup of $ G $, then $ H $ is $ \Pi $-normal in $ G $ by Proposition \ref{p2}.
\end{proof}

Applying Propositions \ref{p1}, \ref{p2} and \ref{p3},  we can obtain the following  corollaries.

\begin{corollary}[{\cite[Theorem 1.1]{Li-2008}}]
	Let $ p $ be the smallest prime dividing the order of a group $ G $ and $ P $ a Sylow
	$ p $-subgroup of $ G $. If every member of some fixed $ \MM_{d}(P) $ is $ SS $-quasinormal in $ G $, then $ G $ is $ p $-nilpotent.
\end{corollary}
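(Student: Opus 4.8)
The plan is to derive this corollary as an immediate specialization of Theorem \ref{third}, taking the normal subgroup $N$ to be $G$ itself. First I would record the arithmetic observation that makes Theorem \ref{third} applicable: since $p$ is the smallest prime dividing $|G|$, every prime divisor of $p-1$ is strictly smaller than $p$ and hence does not divide $|G|$; therefore $(|G|, p-1) = 1$, which is precisely the hypothesis on $G$ required in Theorem \ref{third}.

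Next, set $N = G$. Then $G/N$ is the trivial group, which is (vacuously) $p$-nilpotent, so $N$ is a normal subgroup of $G$ with $G/N$ $p$-nilpotent, as Theorem \ref{third} demands. It remains to verify that $P_{i} \cap N = P_{i}$ satisfies the $\Pi$-property in $G$ for every member $P_{i}$ of the fixed $\MM_{d}(P)$. Each such $P_{i}$ is a maximal subgroup of $P$, hence a $p$-subgroup of $G$ with $p \in \pi(G)$, and by hypothesis it is $SS$-quasinormal in $G$; so Proposition \ref{p2}(2) yields that $P_{i}$ satisfies the $\Pi$-property in $G$. With all hypotheses of Theorem \ref{third} in place, we conclude that $G$ is $p$-nilpotent.

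There is no substantial obstacle here: the proof is a direct reduction, and the only points meriting care are recognizing that ``$p$ is the smallest prime dividing $|G|$'' is exactly what converts into the coprimality condition $(|G|, p-1) = 1$, and that instantiating $N = G$ collapses the intersection hypothesis $P_{i} \cap N$ of Theorem \ref{third} to the bare condition on $P_{i}$ furnished by Proposition \ref{p2}(2). One could alternatively attempt to invoke Theorem \ref{second} with $N = G$, but that route would additionally require establishing that $N_{G}(P)$ is $p$-nilpotent, which is less immediate; routing through Theorem \ref{third} sidesteps this entirely.
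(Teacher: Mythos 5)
Your proof is correct and follows exactly the route the paper intends: the paper derives this corollary by combining Proposition \ref{p2}(2) (an $SS$-quasinormal $p$-subgroup satisfies the $\Pi$-property) with Theorem \ref{third} applied to $N=G$, using that the minimality of $p$ forces $(|G|,p-1)=1$. Your two points of care (the coprimality conversion and the collapse of $P_i\cap N$ to $P_i$) are precisely the right ones.
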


\begin{corollary}[{\cite[Theorem 1.2]{Li-2008}}]
	Let $ p $ be a prime dividing the order of a group $ G $ and $ P $ a Sylow $ p $-subgroup of $ G $. If $ N_{G}(P) $ is $ p $-nilpotent and every member of some fixed $ \MM_{d}(P) $ is $ SS $-quasinormal in $ G $, then $ G $ is $ p $-nilpotent.
\end{corollary}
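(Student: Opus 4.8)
The plan is to obtain this corollary as an immediate specialization of Theorem~\ref{second}, taking the normal subgroup $N$ appearing there to be $G$ itself. With $N=G$ the quotient $G/N$ is trivial and hence vacuously $p$-supersoluble, so the hypothesis of Theorem~\ref{second} concerning $G/N$ is automatically satisfied and costs nothing.

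Next I would verify the two remaining hypotheses of Theorem~\ref{second}. The first, that $N_{G}(P)$ is $p$-nilpotent, is given outright. For the second, fix the $\MM_{d}(P)=\{P_{1},\dots,P_{d}\}$ supplied by the hypothesis; since each $P_{i}$ is a maximal subgroup of the $p$-group $P$, it is in particular a $p$-subgroup of $G$, and $p\in\pi(G)$, so Proposition~\ref{p2}(2) applies and shows that each $SS$-quasinormal subgroup $P_{i}$ satisfies the $\Pi$-property in $G$. Finally, because $N=G$ we have $P_{i}\cap N=P_{i}$, so the clause ``$P_{i}\cap N$ satisfies the $\Pi$-property in $G$ for every member $P_{i}$ of $\MM_{d}(P)$'' is precisely what has just been established. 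Theorem~\ref{second} then yields that $G$ is $p$-nilpotent, which is the assertion.

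There is no genuine obstacle in this argument: the substance is carried entirely by Theorem~\ref{second} together with Proposition~\ref{p2}(2). The only points deserving (momentary) attention are the bookkeeping ones — namely that the members of $\MM_{d}(P)$ are honestly $p$-subgroups of $G$, so that Proposition~\ref{p2} is applicable, and that the choice $N=G$ simultaneously trivializes the $p$-supersolubility requirement on $G/N$ and makes $P_{i}\cap N$ coincide with $P_{i}$. Analogously, the preceding corollary (Li's Theorem~1.1) would be deduced from Theorem~\ref{second} in the same way, using that when $p$ is the smallest prime dividing $|G|$ and the members of $\MM_{d}(P)$ satisfy the $\Pi$-property one already forces $N_{G}(P)$ to be $p$-nilpotent, or alternatively by the analogous reduction combined with a minimal counterexample argument.
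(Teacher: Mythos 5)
Your proposal is correct and is essentially the derivation the paper intends: the corollaries are stated as applications of the main theorems together with Propositions \ref{p1}--\ref{p3}, and here one simply takes $N=G$ in Theorem \ref{second} (so $G/N$ is trivially $p$-supersoluble and $P_i\cap N=P_i$) and invokes Proposition \ref{p2}(2) to see that each $SS$-quasinormal maximal subgroup $P_i$ of $P$ satisfies the $\Pi$-property in $G$. The closing aside about Li's Theorem 1.1 is tangential and slightly imprecise (that case is most naturally handled via Theorem \ref{third}, since $p$ smallest gives $(|G|,p-1)=1$), but it does not affect the argument for the statement at hand.
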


\begin{corollary}[{\cite[Theorem 1.3]{Li-2008}}]
	Let $ G $ be a $ p $-solvable group for a prime $ p $ and $ P $ a Sylow $ p $-subgroup of $ G $.
	Suppose that every member of some fixed  $ \MM_{d}(P) $ is $ SS $-quasinormal in $ G $. Then $ G $ is $ p $-supersoluble.
\end{corollary}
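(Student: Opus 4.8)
The plan is to obtain this corollary as an immediate specialization of Theorem~\ref{first}, choosing the normal subgroup $N$ there to be $G$ itself. With $N = G$ the quotient $G/N$ is trivial and hence vacuously $p$-supersoluble, so that hypothesis of Theorem~\ref{first} is automatic; moreover $P_i \cap N = P_i$ for every member $P_i$ of the fixed family $\MM_d(P)$, so verifying the one remaining hypothesis of Theorem~\ref{first} reduces to showing that each such $P_i$ satisfies the $\Pi$-property in $G$.

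We may assume $p$ divides $|G|$, since otherwise $P = 1$, the family $\MM_d(P)$ is empty, and $G$ is trivially $p$-supersoluble. Then each member $P_i$ of the fixed $\MM_d(P)$ is a $p$-subgroup of $G$ which, by hypothesis, is $SS$-quasinormal in $G$; by Proposition~\ref{p2}(2) it satisfies the $\Pi$-property in $G$. Hence $P_i \cap N = P_i$ satisfies the $\Pi$-property in $G$ for every $P_i$ of the fixed $\MM_d(P)$.

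Now I would invoke the sufficiency direction of Theorem~\ref{first} with the pair $(G, N) = (G, G)$: $G$ is $p$-soluble, $G/N$ is $p$-supersoluble, and $P_i \cap N$ satisfies the $\Pi$-property in $G$ for every member $P_i$ of the fixed $\MM_d(P)$; the theorem then yields that $G$ is $p$-supersoluble, which is exactly the assertion. I do not expect any genuine obstacle here --- the argument is a formal reduction --- the only point worth flagging being that $SS$-quasinormality of a $p$-subgroup really does force the $\Pi$-property, which is precisely the content of Proposition~\ref{p2}(2) (internally: $SS$-quasinormal $\Rightarrow$ $S$-semipermutable $\Rightarrow$ $\Pi$-property), so nothing further needs to be established.
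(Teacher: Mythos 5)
Your derivation is correct and matches the paper's intended route: the paper presents this corollary as an application of Proposition~\ref{p2}(2) (an $SS$-quasinormal $p$-subgroup is $S$-semipermutable, hence satisfies the $\Pi$-property) combined with Theorem~\ref{first} taking $N=G$, which is exactly what you do. The reduction to the case $p\mid |G|$ is a minor but correctly handled point.
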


\begin{corollary}[{\cite[Theorem 3.1]{LiYangming-2010}}]
	Let $ G $ be a group and $ P $ be  a Sylow $ p $-subgroup of $ G $, where $ p $ is the smallest prime dividing $ |G| $. If all maximal subgroups of $ P $ are $ S $-semipermutable in $ G $, then $ G $ is $ p $-nilpotent.
\end{corollary}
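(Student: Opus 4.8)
The plan is to deduce this corollary from Theorem~\ref{second}, using induction on $|G|$ to supply the hypothesis that $N_G(P)$ is $p$-nilpotent. Since $p$ is the smallest prime dividing $|G|$, every prime divisor of $|G|$ is at least $p$ while every prime divisor of $p-1$ is less than $p$; hence $(|G|,p-1)=1$, which will be needed later. Next, each member $P_i$ of a fixed $\MM_d(P)$ is a maximal subgroup of $P$, hence $S$-semipermutable in $G$ by hypothesis, so $P_i$ satisfies the $\Pi$-property in $G$ by Proposition~\ref{p2}(1). Taking $N:=G$ in Theorem~\ref{second} (so that $G/N=1$ is trivially $p$-supersoluble), we conclude that $G$ is $p$-nilpotent once we know that $N_G(P)$ is $p$-nilpotent. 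So it suffices to prove that $N_G(P)$ is $p$-nilpotent, and we argue by induction on $|G|$.

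Assume first that $N_G(P)<G$. Then $P\in{\rm Syl}_p(N_G(P))$, the prime $p$ is still the smallest prime dividing $|N_G(P)|$, and every maximal subgroup $H$ of $P$ is $S$-semipermutable in $N_G(P)$: for a prime $q\neq p$ and $K_q\in{\rm Syl}_q(N_G(P))$, pick $G_q\in{\rm Syl}_q(G)$ with $K_q=G_q\cap N_G(P)$, so that by the modular law (using $H\leq P\leq N_G(P)$) one has $HK_q=H(G_q\cap N_G(P))=(HG_q)\cap N_G(P)$, which is a subgroup of $N_G(P)$ since $HG_q=G_qH$; hence $HK_q=K_qH$. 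By the inductive hypothesis $N_G(P)$ is $p$-nilpotent, and then $G$ is $p$-nilpotent by the first paragraph.

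It remains to treat the case $P\unlhd G$. Here every maximal subgroup $H$ of $P$ is in fact normal in $G$: for a prime $q\neq p$ and $G_q\in{\rm Syl}_q(G)$, the product $L:=HG_q=G_qH$ is a subgroup with $|L|_p=|H|$, so $H=P\cap L$ (the latter being a $p$-subgroup of $L$ containing $H$); since $P\unlhd G$ we get $H=P\cap L\unlhd L$, i.e. $G_q\leq N_G(H)$, and together with $P\leq N_G(H)$ (as $H\unlhd P$) this makes $|N_G(H)|$ divisible by $|G|_q$ for every prime $q$, so $N_G(H)=G$. Now let $H_0$ be a Hall $p'$-subgroup of $G$, which exists by Schur--Zassenhaus because $P\unlhd G$. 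Every maximal subgroup of the $\mathbb{F}_p$-space $P/\Phi(P)$ is the image of some maximal subgroup of $P$, hence is invariant under $H_0$; so $H_0$ stabilises every hyperplane of $P/\Phi(P)$ and therefore acts on it by scalars. Since $(|G|,p-1)=1$ and $|H_0|$ divides $|G|$, this scalar action is trivial, i.e. $[P,H_0]\leq\Phi(P)$. As $H_0$ acts coprimely on $P$, we have $P=[P,H_0]\,C_P(H_0)\leq\Phi(P)\,C_P(H_0)$, whence $C_P(H_0)=P$. Therefore $G=P\times H_0$ with $H_0\unlhd G$, so $G$ is $p$-nilpotent, completing the induction.

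The substantive content is carried by Theorem~\ref{second}; the only step that is not purely formal is the case $P\unlhd G$, where the $S$-semipermutability of \emph{every} maximal subgroup of $P$ (not merely of the members of some $\MM_d(P)$) is combined with $(|G|,p-1)=1$. This coprimality is where the ``smallest prime'' hypothesis is used, and it is exactly the extra leverage that $S$-semipermutability supplies beyond the bare $\Pi$-property. Alternatively one could bypass Theorem~\ref{second} and rerun a minimal-counterexample argument modelled on its proof, but the route above is shorter.
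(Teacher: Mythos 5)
Your argument is correct, but it takes a noticeably longer road than the one the paper has set up. The intended derivation is a one-liner from Theorem~\ref{third} rather than Theorem~\ref{second}: since $p$ is the smallest prime dividing $|G|$ we have $(|G|,p-1)=1$; each $P_i$ in a fixed $\MM_d(P)$ is a maximal subgroup of $P$, hence $S$-semipermutable, hence satisfies the $\Pi$-property by Proposition~\ref{p2}(1); now take $N=G$ in Theorem~\ref{third} (so $G/N=1$ is $p$-nilpotent and $P_i\cap N=P_i$) and conclude directly that $G$ is $p$-nilpotent --- no hypothesis on $N_G(P)$ is needed. By routing through Theorem~\ref{second} instead, you are forced to manufacture the hypothesis that $N_G(P)$ is $p$-nilpotent, which costs you an induction (with the correct observation that $S$-semipermutability restricts to $N_G(P)$ via the Dedekind identity) plus a separate direct analysis of the case $P\unlhd G$ (maximal subgroups of $P$ normal in $G$, scalar action on $P/\Phi(P)$, coprime action giving $G=P\times H_0$). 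All of these steps check out, and the normal-Sylow case is a nice self-contained argument exploiting the full strength of $S$-semipermutability of \emph{all} maximal subgroups; but it is work that Theorem~\ref{third} renders unnecessary. If you want the streamlined version, replace your entire induction by the single application of Theorem~\ref{third} described above.
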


\begin{corollary}[{\cite[Theorem 3.1]{Liu-2008}}]
	Let $ G $ be a $ p $-soluble group and let $ P $ be a Sylow $ p $-subgroup
	of $ G $, where $ p $ is a fixed prime. Then  $ G $ is $ p $-supersoluble if  and only if every member of some fixed  $ \MM_{d}(P) $ is a CAP-subgroup of $ G $.
\end{corollary}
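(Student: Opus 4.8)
The plan is to deduce this corollary from Theorem~\ref{first} together with Proposition~\ref{p1}(5), handling the easy converse by a direct covering/avoiding computation.

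First I would prove the sufficiency. Suppose every member $P_{i}$ of some fixed $\MM_{d}(P)$ is a CAP-subgroup of $G$. By Proposition~\ref{p1}(5), each such $P_{i}$ satisfies the $\Pi$-property in $G$. Now apply Theorem~\ref{first} with the choice $N = G$: the quotient $G/N$ is trivial, hence $p$-supersoluble, and $P_{i}\cap N = P_{i}$ satisfies the $\Pi$-property in $G$ for every $P_{i}\in\MM_{d}(P)$. Since $G$ is $p$-soluble by hypothesis, Theorem~\ref{first} yields that $G$ is $p$-supersoluble.

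For the necessity, suppose $G$ is $p$-supersoluble, let $H$ be any $p$-subgroup of $G$ (in particular any member of a fixed $\MM_{d}(P)$), and let $L/K$ be an arbitrary chief factor of $G$. Because $G$ is $p$-soluble, $L/K$ is either a $p'$-group or a $p$-group. If $L/K$ is a $p'$-group, then $(H\cap L)K/K$ is a $p$-subgroup of the $p'$-group $L/K$, so it is trivial; hence $H\cap L\leq K$ and $H\cap L = H\cap K$, that is, $H$ avoids $L/K$. If $L/K$ is a $p$-group, then $p$-supersolubility forces $|L/K| = p$, and $(H\cap L)K/K$ is a subgroup of the group $L/K$ of prime order $p$; if it is trivial then $H$ avoids $L/K$ as before, and if it equals $L/K$ then $L = (H\cap L)K\leq HK$, whence $HL = HK$ and $H$ covers $L/K$. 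In every case $H$ covers or avoids $L/K$, so $H$ is a CAP-subgroup of $G$; in particular every member of any fixed $\MM_{d}(P)$ is a CAP-subgroup of $G$, which is stronger than required.

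I do not expect a genuine obstacle: the substance is carried entirely by Theorem~\ref{first} and Proposition~\ref{p1}(5). The only point requiring a little care is the necessity direction, where one invokes the standard fact that every chief factor of a $p$-soluble group is either a $p$-group or a $p'$-group, together with the elementary remark that a subgroup of a group of prime order is trivial or the whole group.
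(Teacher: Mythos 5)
Your proof is correct and follows essentially the same route the paper intends: the sufficiency is exactly the paper's implicit argument (Proposition~\ref{p1}(5) converts the CAP hypothesis into the $\Pi$-property, and Theorem~\ref{first} applied with $N=G$ finishes). Your explicit covering/avoiding verification of the necessity direction is also correct and is a welcome addition, since the paper only gestures at the converse (via Lemma~\ref{p-supersoluble} for the $\Pi$-property analogues) and otherwise defers to the cited reference.
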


\begin{corollary}[{\cite[Theorem 3.3]{Liu-2008}}]
	Let $ p $ be the smallest prime dividing the order of a group $ G $ and let $ P $ be a Sylow $ p $-subgroup of $ G $. Then $ G $ is $ p $-nilpotent if and only if  every member of some fixed  $ \MM_{d}(P) $ is a CAP-subgroup of $ G $.
\end{corollary}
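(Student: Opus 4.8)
The plan is to prove the two implications separately: the ``if'' part will follow at once from Theorem \ref{third}, while the ``only if'' part needs only a short structural argument. The key observation is that ``$p$ is the smallest prime dividing $|G|$'' forces $(|G|,p-1)=1$, which is precisely the hypothesis allowing Theorem \ref{third} to be applied with $N=G$.

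For the sufficiency, suppose every member of some fixed $\MM_d(P)$ is a CAP-subgroup of $G$. By Proposition \ref{p1}(5), each such member satisfies the $\Pi$-property in $G$. I would then apply Theorem \ref{third} with $N=G$: the quotient $G/N$ is trivial, hence $p$-nilpotent, and $(|G|,p-1)=1$ because every prime divisor of $p-1$ is smaller than $p$ and so does not divide $|G|$. Since $P_i\cap G=P_i$ satisfies the $\Pi$-property for every member $P_i$ of the chosen $\MM_d(P)$, Theorem \ref{third} yields that $G$ is $p$-nilpotent.

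For the necessity, assume $G$ is $p$-nilpotent and let $K=O_{p'}(G)$ be its normal $p$-complement; note that a $p$-nilpotent group is $p$-soluble, so each chief factor is a $p$-group or a $p'$-group. First I would show $G$ is $p$-supersoluble. If $U/V$ is a chief factor of $G$ that is a $p$-group, then $U\cap K$ is a normal subgroup of $G$ contained in $V$ (otherwise $(U\cap K)V=U$, exhibiting the nontrivial $p$-group $U/V$ as a section of the $p'$-group $K$), so $[U,K]\leq U\cap K\leq V$ and $K$ centralises $U/V$; hence $U/V$ is an irreducible module for the $p$-group $G/K$ over the field of $p$ elements, and therefore $|U/V|=p$. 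Next I would observe that in any $p$-supersoluble group every $p$-subgroup $H$ is a CAP-subgroup: a chief factor $U/V$ that is a $p'$-group is avoided, since $(H\cap U)V/V$ is a $p$-subgroup of the $p'$-group $U/V$ and hence $H\cap U\leq V$; a chief factor $U/V$ that is a $p$-group has order $p$, so $(H\cap U)V/V$ is either trivial, in which case $H$ avoids $U/V$, or equal to $U/V$, in which case $HU=H(H\cap U)V=HV$ and $H$ covers $U/V$. In particular every member of every $\MM_d(P)$ is a CAP-subgroup of $G$.

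I do not expect a real obstacle here: the content of the corollary is carried by Theorem \ref{third} together with the implication ``CAP-subgroup $\Rightarrow$ $\Pi$-property'' of Proposition \ref{p1}(5). The only slightly delicate point is the necessity, where one must justify that $p$-nilpotency forces every $p$-chief factor to have order $p$ and that this makes each $p$-subgroup cover or avoid every chief factor; both are routine once organised as above.
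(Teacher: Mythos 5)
Your proof is correct, and the sufficiency direction follows exactly the route the paper intends: a CAP-subgroup satisfies the $\Pi$-property by Proposition \ref{p1}(5), and Theorem \ref{third} applies with $N=G$ because the minimality of $p$ forces $(|G|,p-1)=1$. Your necessity argument ($p$-nilpotent $\Rightarrow$ $p$-supersoluble $\Rightarrow$ every $p$-subgroup covers or avoids each chief factor) is also correct and supplies a step the paper leaves implicit by citing the original source.
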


\begin{corollary}[{\cite[Theorem 3.4]{Liu-2008}}]
	Suppose that $ P $ is a Sylow $ p $-subgroup of a group $ G $ and $ N_{G}(P) $ is $ p $-nilpotent for some prime $ p\in \pi(G) $. Then $ G $ is $ p $-nilpotent
	if and only if  every member of some fixed  $ \MM_{d}(P) $ is a CAP-subgroup of $ G $.
\end{corollary}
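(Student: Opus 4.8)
The plan is to deduce this corollary from the case $ N = G $ of Theorem \ref{second}, together with Proposition \ref{p1}(5). When $ N = G $ the quotient $ G/N $ is trivial, hence $ p $-supersoluble, and $ P_{i}\cap N = P_{i} $ for each member $ P_{i} $ of a fixed $ \MM_{d}(P) $; so Theorem \ref{second} specializes to the statement that, under the standing hypothesis that $ N_{G}(P) $ is $ p $-nilpotent, the group $ G $ is $ p $-nilpotent if and only if every member of some fixed $ \MM_{d}(P) $ satisfies the $ \Pi $-property in $ G $.

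For the sufficiency I would argue as follows. Assume that every member $ P_{i} $ of some fixed $ \MM_{d}(P) $ is a CAP-subgroup of $ G $. By Proposition \ref{p1}(5) each such $ P_{i} $ satisfies the $ \Pi $-property in $ G $, and since $ N_{G}(P) $ is $ p $-nilpotent by hypothesis, the specialization of Theorem \ref{second} recorded above immediately yields that $ G $ is $ p $-nilpotent.

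For the necessity, suppose $ G $ is $ p $-nilpotent; then $ G $ is $ p $-supersoluble, so by Jordan--H\"older every chief factor of $ G $ is either a $ p' $-group or is cyclic of order $ p $. It then suffices to check that an arbitrary $ p $-subgroup $ Q $ of $ P $ covers or avoids each such chief factor $ L/K $. If $ p \nmid |L/K| $, then $ P\cap K $ and $ P\cap L $ are Sylow $ p $-subgroups of $ K $ and $ L $ of the same order, so $ P\cap K = P\cap L $, whence $ Q\cap K = Q\cap(P\cap K) = Q\cap(P\cap L) = Q\cap L $ and $ Q $ avoids $ L/K $. If $ |L/K| = p $, then $ [Q\cap L : Q\cap K] $ divides $ p $, and the index count $ [QL:QK] = p/[Q\cap L : Q\cap K] $ shows that $ Q $ covers $ L/K $ when $ [Q\cap L:Q\cap K] = p $ and avoids it when $ Q\cap L = Q\cap K $. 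Hence every $ p $-subgroup of $ P $, and in particular every member of any $ \MM_{d}(P) $, is a CAP-subgroup of $ G $.

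The sufficiency is a routine application once Proposition \ref{p1}(5) is invoked; the one point requiring care is the necessity, namely the fact that in a $ p $-supersoluble group every $ p $-subgroup of a Sylow $ p $-subgroup is a CAP-subgroup. One may also bypass the explicit computation: since $ G $ is $ p $-nilpotent it is $ p $-soluble and $ p $-supersoluble, and the $ p $-soluble CAP-criterion (recorded above as the corollary from \cite[Theorem 3.1]{Liu-2008}) then directly produces a fixed $ \MM_{d}(P) $ all of whose members are CAP-subgroups of $ G $.
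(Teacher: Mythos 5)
Your proposal is correct and follows the route the paper intends: the corollary is obtained from Theorem \ref{second} with $N=G$ together with Proposition \ref{p1}(5), a derivation the paper leaves implicit. Your explicit check of the necessity direction (that in a $p$-nilpotent, hence $p$-supersoluble, group every $p$-subgroup of $P$ covers or avoids each chief factor, so every member of any $\MM_{d}(P)$ is a CAP-subgroup) is also sound.
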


\begin{corollary}[{\cite[Theorem 3.8]{Lu-2009}}]
	Let $ p $ be a prime dividing the order of a $ p $-soluble group $ G $ and let $ P $ be a Sylow $ p $-subgroup of $ G $. If every member of some fixed  $ \MM_{d}(P) $ is $ S $-semipermutable in $ G $, then $ G $ is $ p $-supersoluble.
\end{corollary}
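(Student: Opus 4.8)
The plan is to deduce this corollary directly from Theorem \ref{first} by making the degenerate choice $N = G$. First I would observe that $G/G$ is the trivial group, which is vacuously $p$-supersoluble, so $N = G$ satisfies the hypothesis of Theorem \ref{first} that $G/N$ be $p$-supersoluble; and since $G$ is assumed $p$-soluble with $p \mid |G|$ and $P$ a Sylow $p$-subgroup, all the remaining ambient hypotheses of Theorem \ref{first} are already in place.

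Next I would translate the $S$-semipermutability assumption into the language of the $\Pi$-property. Each member $P_{i}$ of the fixed $\MM_{d}(P)$ is a maximal subgroup of the Sylow $p$-subgroup $P$, hence a $p$-subgroup of $G$; by hypothesis it is $S$-semipermutable in $G$, so Proposition \ref{p2}(1) yields that $P_{i}$ satisfies the $\Pi$-property in $G$. Because $N = G$, we have $P_{i} \cap N = P_{i}$, so $P_{i} \cap N$ satisfies the $\Pi$-property in $G$ for every member $P_{i}$ of this fixed $\MM_{d}(P)$.

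Finally I would invoke the sufficiency direction of Theorem \ref{first}: with $N = G$ and the $\Pi$-property of each $P_{i} \cap N$ verified, Theorem \ref{first} gives that $G$ is $p$-supersoluble, which is exactly the assertion. As for the main obstacle: there is essentially none beyond correctly matching the hypotheses — the substantive work is entirely carried by Theorem \ref{first} and by Proposition \ref{p2}(1), and the only point deserving a moment's care is checking that the choice $N = G$ is a legitimate instance of Theorem \ref{first}.
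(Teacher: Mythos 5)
Your proposal is correct and matches the paper's intended derivation: the paper states that these corollaries follow by "applying Propositions \ref{p1}, \ref{p2} and \ref{p3}" to the main theorems, and your route — Proposition \ref{p2}(1) to convert $S$-semipermutability into the $\Pi$-property, then Theorem \ref{first} with the degenerate choice $N=G$ — is exactly that argument. The hypothesis checks (triviality of $G/G$, $P_i\cap G=P_i$, each $P_i$ being a $p$-subgroup) are all handled properly.
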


\begin{corollary}[{\cite[Theorem 3.9]{Lu-2009}}]
	Let $ p $ be an odd prime dividing the order of $ G $ and let $ P $ be a Sylow $ p $-subgroup of a group $ G $. If $ N_{G}(P) $ is $ p $-nilpotent and every member of some fixed  $ \MM_{d}(P) $ is $ S $-semipermutable in $ G $, then $ G $ is $ p $-nilpotent.
\end{corollary}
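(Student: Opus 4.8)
The plan is to obtain this corollary as an immediate specialization of Theorem~\ref{second}, applied with the choice $N = G$. With this choice the quotient $G/N$ is trivial, hence vacuously $p$-supersoluble, so the only hypotheses of Theorem~\ref{second} still in need of verification are that $N_{G}(P)$ is $p$-nilpotent --- which is assumed --- and that $P_{i}\cap N = P_{i}$ satisfies the $\Pi$-property in $G$ for every member $P_{i}$ of the fixed $\MM_{d}(P)$.

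To check the latter, I would note that each $P_{i}\in\MM_{d}(P)$ is a maximal subgroup of $P$, hence a $p$-subgroup of $G$. By hypothesis $P_{i}$ is $S$-semipermutable in $G$, so Proposition~\ref{p2}(1) applies and shows that $P_{i}$ satisfies the $\Pi$-property in $G$. Since $P_{i}\cap N = P_{i}\cap G = P_{i}$, the pair $(P, N)$ with $N = G$ meets every requirement of Theorem~\ref{second}; invoking the sufficiency direction of that theorem then gives that $G$ is $p$-nilpotent, which is the assertion.

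I expect no genuine obstacle here: the argument is a routine translation of the hypotheses, and all the group-theoretic content is already packaged inside Theorem~\ref{second} and Proposition~\ref{p2}. The one point worth a remark is that this route does not use the assumption that $p$ is odd, so in fact the corollary --- in the form derived here --- holds without that restriction; the odd-prime hypothesis is simply carried over from the original statement in \cite{Lu-2009}.
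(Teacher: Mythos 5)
Your proposal is correct and is exactly the derivation the paper intends: apply Theorem~\ref{second} with $N=G$ (so $G/N$ is trivially $p$-supersoluble and $P_i\cap N=P_i$), using Proposition~\ref{p2}(1) to see that each $S$-semipermutable $P_i$ satisfies the $\Pi$-property. Your observation that the oddness of $p$ is never used is also accurate; it is merely inherited from the original statement in \cite{Lu-2009}.
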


\begin{corollary}[{\cite[Theorem 3.1]{Zhong-Lin}}]
	Let $ G $ be a group, and let $ P $ be a Sylow $ p $-subgroup of $ G $, where
	$ p $ is a prime divisor of $ |G| $ with $ (|G|, p-1) = 1 $. Then $ G $ is $ p $-nilpotent if and only if every member in some fixed $ \MM_{d}(P) $ is $ c^{\sharp} $-normal in $ G $.
\end{corollary}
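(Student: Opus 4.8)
The plan is to read this corollary off from Theorem~\ref{third} together with Proposition~\ref{p3}(5), the only point requiring thought being which normal subgroup to feed into Theorem~\ref{third}. For the sufficiency, which is the substantive direction, assume that every member $P_{i}$ of some fixed $\MM_{d}(P)$ is $c^{\sharp}$-normal in $G$. Each $P_{i}$ is a $p$-subgroup of $G$, so Proposition~\ref{p3}(5) gives that $P_{i}\cap O^{p}(G)$ satisfies the $\Pi$-property in $G$ for every $P_{i}\in\MM_{d}(P)$. I would then apply Theorem~\ref{third} with $N=O^{p}(G)$: this $N$ is normal in $G$, the quotient $G/N$ is a $p$-group and hence $p$-nilpotent, and the arithmetic hypothesis $(|G|,p-1)=1$ is already part of the corollary's statement, so every hypothesis of Theorem~\ref{third} is met and its conclusion is exactly that $G$ is $p$-nilpotent. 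It is essential here to take $N=O^{p}(G)$ rather than, say, $N=G$, since Proposition~\ref{p3}(5) produces the $\Pi$-property for $P_{i}\cap O^{p}(G)$ and not for $P_{i}$ itself, so $N$ must be chosen to match.

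For the necessity, suppose $G$ is $p$-nilpotent and fix $P_{i}\in\MM_{d}(P)$. Since every quotient of a $p$-nilpotent group is $p$-nilpotent, for each chief factor $L/K$ of $G$ the subgroup $O^{p}(G)K/K$ is a normal $p$-complement of $G/K$; hence the minimal normal subgroup $L/K$ of $G/K$ either lies inside it, in which case $L/K$ is a $p'$-group, or meets it trivially, in which case $L/K$ is a $p$-group acted on by the $p$-group $G/(O^{p}(G)K)$ and therefore has order $p$. In the first case $(P_{i}\cap L)/(P_{i}\cap K)$ is a section of both the $p$-group $P_{i}$ and the $p'$-group $L/K$, hence trivial, so $P_{i}$ avoids $L/K$; in the second case $[P_{i}\cap L:P_{i}\cap K]$ divides $p$, and one checks directly that $P_{i}$ covers $L/K$ if this index equals $p$ and avoids $L/K$ if it equals $1$. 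Thus $P_{i}$ covers or avoids every chief factor of $G$, that is, $P_{i}$ is a CAP-subgroup of $G$; taking $T=G$ in the definition then shows $P_{i}$ is $c^{\sharp}$-normal in $G$.

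I do not anticipate a serious obstacle. With Proposition~\ref{p3} and Theorem~\ref{third} in hand the sufficiency is a one-line deduction once the choice $N=O^{p}(G)$ is identified, and the necessity rests only on the elementary observation that every chief factor of a $p$-nilpotent group is a $p'$-group or of order $p$, so that every $p$-subgroup is automatically a CAP-subgroup. If anything is delicate, it is just this bookkeeping: making sure that the $O^{p}(G)$ appearing in the conclusion of Proposition~\ref{p3}(5) is matched by taking $N=O^{p}(G)$ in Theorem~\ref{third}.
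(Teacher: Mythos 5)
Your proposal is correct and follows exactly the route the paper intends: the paper derives this corollary by combining Proposition~\ref{p3}(5) with Theorem~\ref{third} applied to $N=O^{p}(G)$ (so that $G/N$ is a $p$-group, hence $p$-nilpotent), which is precisely your sufficiency argument, and your identification of $N=O^{p}(G)$ as the key bookkeeping point is on target. Your necessity argument (every maximal subgroup of $P$ in a $p$-nilpotent group is a CAP-subgroup, hence $c^{\sharp}$-normal with $T=G$) is also sound, and in fact spells out a direction the paper leaves implicit.
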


\begin{corollary}[{\cite[Theorem 3.2]{Zhong-Lin}}]
	Let $ G $ be a $ p $-soluble group, and let $ P $ be a Sylow $ p $-subgroup
	of $ G $, where $ p $ is a prime divisor of $ |G| $. Then $ G $ is $ p $-supersolvable if and only if every member in some fixed $ \MM_{d}(P) $ is $ c^{\sharp} $-normal in $ G $.
\end{corollary}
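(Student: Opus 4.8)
The plan is to obtain both directions of the equivalence from the machinery already in place. For the ``if'' direction, I would start from the assumption that every member $P_{1},\dots,P_{d}$ of some fixed $\MM_{d}(P)$ is $c^{\sharp}$-normal in $G$. Since each $P_{i}$ is a $p$-subgroup of $G$, Proposition \ref{p3}(5) immediately gives that $P_{i}\cap O^{p}(G)$ satisfies the $\Pi$-property in $G$ for every $i$. I would then set $N=O^{p}(G)$: this is a normal subgroup of $G$ with $G/N$ a $p$-group, hence $p$-supersoluble, and since $G$ is $p$-soluble by hypothesis, the pair $(G,N)$ meets all the hypotheses of Theorem \ref{first}, because $P_{i}\cap N=P_{i}\cap O^{p}(G)$. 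Theorem \ref{first} then yields at once that $G$ is $p$-supersoluble.

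For the ``only if'' direction, I would avoid the $\Pi$-property entirely. Assuming $G$ is $p$-supersoluble, the corollary above corresponding to \cite[Theorem 3.1]{Liu-2008} supplies a fixed $\MM_{d}(P)=\{P_{1},\dots,P_{d}\}$ each of whose members is a CAP-subgroup of $G$. It then remains only to observe that any CAP-subgroup $H$ of $G$ is $c^{\sharp}$-normal in $G$: taking $T=G$, which is normal in $G$, we have $G=HT$ and $H\cap T=H$ is a CAP-subgroup of $G$. Hence each member of this $\MM_{d}(P)$ is $c^{\sharp}$-normal in $G$, which finishes this direction.

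The whole argument is essentially bookkeeping once Theorem \ref{first}, Proposition \ref{p3}(5) and the CAP-characterisation of $p$-supersolubility are available, so I do not anticipate a genuine obstacle. The one point that needs a little attention is lining up the subgroup $P_{i}\cap O^{p}(G)$ produced by Proposition \ref{p3}(5) with the subgroup $P_{i}\cap N$ appearing in Theorem \ref{first}; this is precisely why the choice $N=O^{p}(G)$ is the natural one, since $O^{p}(G)$ is the smallest normal subgroup of $G$ whose quotient is a $p$-group, and therefore $p$-supersoluble, which is exactly what Theorem \ref{first} requires of $G/N$.
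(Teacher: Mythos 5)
Your proposal is correct and matches the derivation the paper intends (the paper leaves it implicit, merely noting that the corollaries follow from Propositions \ref{p1}--\ref{p3} together with the main theorems): sufficiency via Proposition \ref{p3}(5) applied with $N=O^{p}(G)$ in Theorem \ref{first}, and necessity by observing that in a $p$-supersoluble group the members of $\MM_{d}(P)$ are CAP-subgroups, hence $c^{\sharp}$-normal with $T=G$.
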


\section*{Acknowledgments}

This work is supported by the National Natural Science Foundation of China (Grant No.12071376).
	
\small

\end{sloppypar}	
\end{document}